\documentclass[12pt]{article}
\usepackage{amsmath,amsthm, amssymb}
\usepackage{amssymb,latexsym}

\newtheorem{theorem}{Theorem}

\newtheorem{lemma}{Lemma}

\textheight=21.5cm
\textwidth=16cm
\hoffset=-1cm
\parindent=16pt

\begin{document}

\baselineskip=17pt

\title{\bf The Bombieri -- Vinogradov theorem for primes of the form $\mathbf{p=x^2+y^2+1}$}

\author{\bf S. I. Dimitrov}

\date{2024}

\maketitle
\begin{abstract}
In this paper, we establish a Bombieri -- Vinogradov type result for prime numbers of the form $p=x^2+y^2+1$. The proof is based on the enveloping sieve.\\
\quad\\
\textbf{Keywords}:  Bombieri -- Vinogradov theorem $\cdot$ Enveloping sieve $\cdot$ Linnik primes\\
\quad\\
{\bf  2020 Math.\ Subject Classification}: 11B25 $\cdot$ 11L20 $\cdot$ 11L40 $\cdot$ 11N05 $\cdot$ 11N36
\end{abstract}

%\dedicatory{Dedicated to}

\section{Introduction and statement of the result}
\indent

The Bombieri -- Vinogradov theorem is extremely important result in analytic number theory and has various applications.
It concerns the distribution of primes in arithmetic progressions, averaged over a range of moduli. It asserts that when $X\geq2$ and $B>0$ is a fixed, then
\begin{equation}\label{Bomb-Vin}
\sum\limits_{d\le \sqrt{X}/(\log X)^{B+5}}\max\limits_{(a,\,d)=1}\Bigg|\sum_{p\le X\atop{p\equiv a\,( \textmd{mod}\, d)}}1-\frac{\pi(X)}{\varphi(d)}\Bigg|\ll\frac{X}{\log^BX}\,,
\end{equation}
where $\pi(X)$ is the prime-counting function and $\varphi (n)$ is Euler's function.
An interesting problem in number theory is the proof of the Bombieri -- Vinogradov theorem for prime numbers of a special form.
In this connection Peneva \cite{Peneva}, Wang and Cai \cite{Cai}, Lu \cite{Lu} and J. Li, M. Zhang and F. Xue \cite{Zhang-Li} proved a Bombieri -- Vinogradov type result
for Piatetski-Shapiro \cite{Shapiro} primes $p=[n^c]$. 
Recently Nath \cite{Nath} obtained a Bombieri-Vinogradov type theorems for Maynard \cite{Maynard} primes which are primes with a missing digit. 
On the other hand in 1960 Linnik \cite{Linnik} showed that there exist infinitely many prime numbers of the form
$p=x^2 + y^2 +1$, where $x$ and $y$ are integers.
More precisely he proved the formula
\begin{equation}\label{Linnnik}
\sum_{p\leq X}r(p-1)=\pi\prod_{p>2}\bigg(1+\frac{\chi(p)}{p(p-1)}\bigg)\frac{X}{\log X}+\mathcal{O}\Bigg(\frac{X(\log\log X)^5}{(\log X)^{1+\theta_0}}\Bigg)\,,
\end{equation}
where $r(n)$ is the number of solutions of the equation $n=x^2 + y^2$ in integers, $\chi(n)$ is the non-principal character modulo 4 and
\begin{equation}\label{theta0}
\theta_0=\frac{1}{2}-\frac{1}{4}e\log2=0.0289...
\end{equation}
A sharper estimate for the remainder term in \eqref{Linnnik} was established by y Bredikhin \cite{Bredikhin}.
Motivated by these studies and applying the enveloping sieve we establish a Bombieri -- Vinogradov type result for primes of the form $p=x^2+y^2+1$ that are called Linnik primes.
More precisely we prove the following theorem.
\begin{theorem}\label{Theorem}
Let $X>e$ and $A>0$ is a fixed. Then
\begin{equation*}
\sum\limits_{q\le \log^AX\atop{(q, a)=1}}\Bigg|\sum\limits_{p\leq X\atop{p\equiv a\, (\textmd{mod}\, q)}}r(p-1)-\frac{1}{\varphi(q)}\sum\limits_{p\leq X}r(p-1)\Bigg|
\ll\frac{X(\log\log X)^7}{(\log X)^{1+\theta_0}}\,.
\end{equation*}
\end{theorem}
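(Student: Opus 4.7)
The natural starting point is the classical identity $r(n)=4\sum_{d\mid n}\chi(d)$, where $\chi$ is the non-principal character modulo $4$. Inserting it into the inner sum over primes reduces the theorem to bounding
\[
\sum_{\substack{q\le \log^A X\\(q,a)=1}}\Bigg|\,4\!\sum_{d\ \mathrm{odd}}\!\chi(d)\!\!\sum_{\substack{p\le X\\ p\equiv a\,(q)\\ p\equiv 1\,(d)}}\!\!1\;-\;\frac{1}{\varphi(q)}\sum_{p\le X}r(p-1)\,\Bigg|.
\]
I would then split the $d$-sum at a parameter $D$ (to be optimised) and treat the two ranges by completely different methods.

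For $d\le D$ the joint modulus $[q,d]$ has size at most $qD$, and I would apply the Bombieri--Vinogradov theorem~\eqref{Bomb-Vin} to extract the expected density. Choosing $D$ so that $qD\le X^{1/2}/(\log X)^{C}$ for a sufficiently large $C$ makes the resulting error acceptable after summation over $d$ and $q$, and the main term unfolds to $\varphi(q)^{-1}\cdot 4\sum_{d\le D}\chi(d)/\varphi(d)\cdot\pi(X)$. A comparison with the truncation of Linnik's asymptotic~\eqref{Linnnik} shows that the tail $d>D$ on the \emph{main-term side} already costs $X(\log\log X)^{O(1)}/(\log X)^{1+\theta_0}$, which sets the quality of the final bound.

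For $d>D$ I would apply the hyperbola method, converting the divisor $d$ into the complementary divisor $k=(p-1)/d$ of size less than $X/D$, and then majorise the characteristic function of primes by the enveloping sieve weight $\beta(n)$ mentioned in the abstract. The remaining expression
\[
\sum_{k<X/D}\chi^{*}(k)\!\sum_{\substack{n\le X\\ n\equiv a\,(q)\\ n\equiv 1\,(k)}}\!\beta(n),
\]
where $\chi^{*}$ encodes the $2$-adic valuation and sign arising from $\chi((p-1)/k)$, would then be estimated using the sharp equidistribution of $\beta$ in arithmetic progressions, followed by Cauchy--Schwarz and a large-sieve mean value. Optimising the split point $D$ against the small-$d$ error reproduces the exponent $\theta_0$ of~\eqref{theta0}, and the power $(\log\log X)^{7}$ absorbs the usual arithmetic losses from the singular series and divisor-type averages.

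The main obstacle is the uniform estimation of the enveloping sieve sum in both the outer modulus $q\le \log^A X$ and the inner modulus $k\le X/D$: the weight $\beta$ must retain its multiplicative regularity and equidistribution quality when restricted to $n\equiv a\,(q)$ and $n\equiv 1\,(k)$, and the resulting bound must carry an honest $\varphi(q)^{-1}$ saving so that the trivial diagonal contribution does not destroy the average over $q$. A secondary but delicate point is checking that the main terms produced in the small-$d$ range assemble into precisely $\varphi(q)^{-1}\sum_{p\le X} r(p-1)$ with the sharpness of~\eqref{Linnnik}, which amounts to reproducing Linnik's singular-series calculation uniformly in $q$.
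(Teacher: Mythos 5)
Your skeleton (the identity $r(n)=4\sum_{d\mid n}\chi(d)$, Bombieri--Vinogradov for small $d$, complementary divisors, an enveloping sieve for the rest) matches the paper's, but two steps in your plan would fail as written. First, you propose to ``majorise the characteristic function of primes by the enveloping sieve weight $\beta(n)$'' inside the sum $\sum_{k<X/D}\chi^{*}(k)\sum_{n\equiv a(q),\,n\equiv 1(k)}\beta(n)$. This is not legitimate: $\beta\ge \mathbf{1}_{\mathrm{prime}}$ only yields an upper bound for sums with non-negative weights, and here the weight $\chi^{*}(k)$ changes sign. The paper (following Hooley) invokes the enveloping sieve $f=g+h$ only \emph{after} a Cauchy--Schwarz step, to majorise the non-negative second moment $\sum_{p\le X}F^{2}(p,q)\le\sum_{n\le X}F^{2}(n,q)f(n)$, where $F(p,q)=\sum_{D<d<X/D,\;p\equiv l(d,q)\,(dq)}\chi(d)$. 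The order of operations matters and cannot be reversed.

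Second, you have misattributed the source of the exponent $\theta_0$. It does not come from ``the tail $d>D$ on the main-term side'' (the tail of $\sum_{d>D}\chi(d)/\varphi(d)$ is essentially power-saving in $D$), nor can ``Cauchy--Schwarz and a large-sieve mean value'' produce it: the large sieve handles moduli up to $X^{1/2}$ but only saves arbitrary powers of $\log X$ in the error relative to the enveloping sieve's own main term, and the density $B(X)\ll(\log\log X)^2/\log X$ of the enveloping sieve gives at most one logarithm. The exponent $\theta_0=\tfrac12-\tfrac14 e\log 2$ arises specifically in the critical middle range $D<d<X/D$ (i.e.\ $d$ near $X^{1/2}$), from Hooley's interplay between (a) the count $\Sigma_E$ of primes $p\le X$ for which $p-1$ has a divisor in that range, bounded by splitting on $\Omega(p-l(d,q))\lessgtr\alpha\log\log X$ and optimising at $\alpha=e/2$ to get $\Sigma_E\ll X(\log\log X)^3/(q(\log X)^{2-\frac{e}{2}\log 2})$, and (b) the second moment $\Sigma_F\ll X(\log\log X)^7/(\varphi(q)\log X)$; the geometric mean of these is exactly $X(\log\log X)^5/(\log X)^{1+\theta_0}$ times the $q$-factor. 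Your plan contains no analogue of step (a). Relatedly, the paper's decomposition is a \emph{three-way} split ($d\le D$, $D<d<X/D$, $d\ge X/D$) rather than your two-way split: the range $d\ge X/D$ reduces via complementary divisors $m\le(p-1)D/X<D$ back to Bombieri--Vinogradov, and the $\varphi(q)^{-1}$-weighted middle range is handled by Hooley's unconditional Theorem (Lemma~\ref{Hooley1}) directly, so no uniform singular-series matching of the kind you worry about in your last paragraph is ever needed.
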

As we can see, the range of moduli $q\leq\log^AX$ in the theorem is a lot shorter than the one in the classical Bombieri-Vinogradov theorem.
This is due to the fact that in addition to the $X^{1/2}$-barrier are added the restrictions of the enveloping sieve.

\section{Notations and outline of the proof}
\indent

Assume that $X\geq2$ and $A>0$. The letter $p$ will always denote prime number.
As usual $\tau _k(n)$ is the number of solutions of the equation $m_1m_2\ldots m_k$ $=n$ in natural numbers $m_1,\,\ldots,m_k$.
We denote by $(d,q)$, $[d,q]$ the greatest common divisor and the least common multiple of $d$ and $q$ respectively.
The function $\Omega(n)$ counts the total number of prime factors of $n$ honoring their multiplicity.
Instead of $m\equiv n\,\pmod {k}$ we write for simplicity $m\equiv n(k)$. Denote
\begin{align}
\label{Q}
&Q=\log^AX\,;\\
\label{D}
&D=\frac{X^{\frac{1}{2}}}{(\log X)^{A+14}}\,;\\
\label{Y}
&Y=X^{1/(\log\log X)^2}\,;\\
\label{P}
&P=\prod_{p\leq Y} p\,;\\
\label{gn}
&g(n)=
\begin{cases}1\,,\;\; \mbox{ if n is a prime not exceeding Y },\\
0\,,\;\; \mbox{ else }; 
\end{cases}
\end{align}
\begin{equation}\label{hn}
\hspace{-35mm} h(n)=\begin{cases}1\,,\;\; \mbox{ if } (n, P)=1\,,\\
0\,,\;\; \mbox{ else };
\end{cases}
\end{equation}
\begin{equation}\label{fn}
\hspace{-54mm} f(n)=g(n)+h(n)\,.
\end{equation}
The function $f(n)$ is called the function of the enveloping sieve. It is clear that $f(n)=1$ when $n$ is a prime.
When the system 
\begin{equation*}
\left|\begin{array}{ll}
p \equiv a_1 \, (d)  \\
p \equiv a_2 \, (q)
\end{array}
\right.
\end{equation*}
has a solution we denote it with $l(d,q)$. From Chinese remainder theorem we have that if  $(a_1, d)=(a_2, q)=1$ then $p\equiv l(d,q)\,([d,q])$ and $(l(d,q), [d,q])=1$.

In the first step of proof by \eqref{Q}, \eqref{D} and the well-known identity 
\begin{equation*}
r(n)=4\sum_{d|n}\chi(d)=4\left( \sum\limits_{d|n\atop{d\leq D}}\chi(d)+\sum\limits_{d|n\atop{D<d<X/D}}\chi(d)+\sum\limits_{d|n\atop{d\geq X/D}}\chi(d)\right)
\end{equation*}
we write
\begin{equation}\label{decomposition}
\sum\limits_{q\le \log^AX\atop{(q, a)=1}}\Bigg|\sum\limits_{p\leq X\atop{p\equiv a\, ( q )}}r(p-1)-\frac{1}{\varphi(q)}\sum\limits_{p\leq X}r(p-1)\Bigg|\ll S_1(X)+S_2(X)+S_3(X)+S_4(X)\,,
\end{equation}
where
\begin{align}
\label{S1}
&S_1(X)=\sum\limits_{q\le Q\atop{(q, a)=1}}\Bigg|\sum\limits_{p\leq X\atop{p\equiv a\, ( q )}}\sum\limits_{d|p-1\atop{d\leq D}}\chi(d)-\frac{1}{\varphi(q)}\sum\limits_{p\leq X}\sum\limits_{d|p-1\atop{d\leq D}}\chi(d)\Bigg|\,,\\
\label{S2}
&S_2(X)=\sum\limits_{q\le Q\atop{(q, a)=1}}\Bigg|\sum\limits_{p\leq X\atop{p\equiv a\, ( q )}}\sum\limits_{d|p-1\atop{d\geq X/D}}\chi(d)-\frac{1}{\varphi(q)}\sum\limits_{p\leq X}\sum\limits_{d|p-1\atop{d\geq X/D}}\chi(d)\Bigg|\,,\\
\label{S3}
&S_3(X)=\sum\limits_{q\le Q\atop{(q, a)=1}}\frac{1}{\varphi(q)}\sum\limits_{p\leq X}\Bigg|\sum\limits_{d|p-1\atop{D<d<X/D}}\chi(d)\Bigg|\,,\\
\label{S4}
&S_4(X)=\sum\limits_{q\le Q\atop{(q, a)=1}}\Bigg|\sum\limits_{p\leq X\atop{p\equiv a\, ( q )}}\sum\limits_{d|p-1\atop{D<d<X/D}}\chi(d)\Bigg|\,.
\end{align}
We shall estimate $S_1(X)$, $S_2(X)$, $S_3(X)$ and $S_4(X)$, respectively, in the sections \ref{SectionS1}, \ref{SectionS2}, \ref{SectionS3}   and \ref{SectionS4}.
In section \ref{Sectionfinal} we shall finalize the proof of Theorem \ref{Theorem}.
The range of moduli in $S_1(X)$ corresponds to the Bombieri-Vinogradov theorem, for $S_2(X)$ we will apply the same method as for $S_1(X)$,  
for $S_3(X)$ we are going to use the fact that the sum is rather short, thus we can get some saving through Lemma \ref{Hooley1}.
Our main difficulties will be in estimating the sum $S_4(X)$. It is there we will apply the enveloping sieve. Note that sum $S_4(X)$ gives the largest contribution.

\section{Preliminary lemmas}
\indent

\begin{lemma}\label{Hooley1}
Let $\omega>0$. Then
\begin{equation*}
\sum\limits_{p\leq X}
\bigg|\sum\limits_{d|p-1\atop{\sqrt{X}(\log X)^{-\omega}<d<\sqrt{X}(\log X)^{\omega}}}\chi(d)\bigg|\ll \frac{X(\log\log X)^5}{(\log X)^{1+\theta_0}}\,.
\end{equation*}
\end{lemma}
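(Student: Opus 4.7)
The plan is to adapt the Hooley--Bredikhin analysis of the error term in Linnik's formula \eqref{Linnnik}, in which a sum of exactly this shape controls the contribution of middle-range divisors to $\sum_{p\leq X}r(p-1)$. The new feature of the present statement is that the absolute value is \emph{inside} the outer sum, so one must exploit both cancellation in $\chi$ and the sparsity of integers with many divisors near $\sqrt n$.

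First I would dyadically split the divisor range $(\sqrt{X}(\log X)^{-\omega},\sqrt{X}(\log X)^\omega)$ into $O(\omega\log\log X)$ pieces $I_j=[D_j,2D_j]$ with $D_j\asymp\sqrt X$, so that, up to a factor $\log\log X$, it suffices to bound
\[
\mathcal T_j(X)=\sum_{p\leq X}\Bigg|\sum_{\substack{d\mid p-1\\ d\in I_j}}\chi(d)\Bigg|.
\]
Next I would split the primes $p\leq X$ into two classes according to whether $p-1$ has few or many divisors in $I_j$. For the first class I would introduce a Rosser--Iwaniec upper-bound sieve of level $D_j^{1-\delta}$ in the variable $p-1$, square via Cauchy--Schwarz, and extract cancellation from $\chi(d_1)\overline{\chi(d_2)}$ on the pair $(d_1,d_2)$ arising after opening. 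For the second class, whose cardinality is controlled by moments of Hooley's $\Delta$-function together with the Brun--Titchmarsh inequality, the trivial bound $|S_j(p)|\leq\#\{d\mid p-1:d\in I_j\}$ already suffices.

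The principal obstacle is the off-diagonal second moment from the first class,
\[
\sum_{\substack{d_1,d_2\in I_j\\ d_1\neq d_2}}\chi(d_1)\overline{\chi(d_2)}\;\#\{p\leq X:[d_1,d_2]\mid p-1\},
\]
in which the modulus $[d_1,d_2]$ is typically of order $X$ and therefore lies well above the $\sqrt X$ Bombieri--Vinogradov threshold. Extracting any cancellation forces the sieve-plus-character combination described above, and optimising the parameter $\delta$ governing the sparse/dense dichotomy is precisely what yields the exponent $\theta_0=\tfrac12-\tfrac{e\log 2}{4}$. The factor $(\log\log X)^5$ absorbs the $\log\log X$ dyadic loss, the Mertens-type divisor moments from the sieve step, and the cost of passing to absolute values for individual primes.
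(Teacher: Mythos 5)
The paper does not prove Lemma \ref{Hooley1} directly: it cites Chapter 5 of Hooley's book, whose argument is reproduced in its arithmetic-progression form as Lemma \ref{Enveloping} in Section \ref{SectionS4}. That argument applies Cauchy--Schwarz once, globally: $\sum_p|F(p)|\le\Sigma_E^{1/2}\Sigma_F^{1/2}$, where $\Sigma_E$ counts the primes $p\le X$ for which $p-1$ has \emph{at least one} divisor in the middle range, and $\Sigma_F=\sum_pF(p)^2$ is the second moment, majorised by inserting the enveloping-sieve weight $f=g+h$. Your plan is recognisably in this family (an upper-bound sieve majorant, Cauchy--Schwarz, a sparse/dense dichotomy, Brun--Titchmarsh), but it misplaces the source of the exponent $\theta_0$. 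In Hooley's proof $\Sigma_F$ saves only a single logarithm; the entire gain $(\log X)^{-2\theta_0}$ comes from $\Sigma_E$, i.e.\ from the dichotomy $\Omega(p-1)\le\alpha\log\log X$ versus $\Omega(p-1)>\alpha\log\log X$ (so that in $p-1=dm$ one of $\Omega(d)$, $\Omega(m)$ is at most $\tfrac{\alpha}{2}\log\log X$), combined with Lemmas \ref{Brun-Titchmarsh}, \ref{Hooley2}, \ref{Hooley12} and \ref{Hooley13}; optimising at $\alpha=e/2$ gives $2\theta_0=1-\tfrac{e\log2}{2}$. It does not come from optimising the level $\delta$ of a Rosser--Iwaniec sieve, and your alternative dichotomy (few/many divisors of $p-1$ in $I_j$, controlled by moments of the $\Delta$-function) is not the mechanism used and would require a separate argument to recover the same exponent.

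The second genuine gap is at the off-diagonal second moment, which you correctly identify as the principal obstacle but do not resolve. ``Extracting cancellation from $\chi(d_1)\overline{\chi(d_2)}$'' in the congruence counts to moduli $[d_1,d_2]\asymp X$ is not available from any mean-value theorem. What Hooley (and Section \ref{SectionS4}) actually does is set $h=(d_1',d_2')$, $d_i'=hd_i$, and split on $h\ge X^{1/8}$ versus $h<X^{1/8}$: in the first case the modulus $d_1d_2hq\ll X^{7/8+\varepsilon}$ lies within the level of distribution of the majorant $f$ (Lemma \ref{Hooley11}), and the cancellation is harvested from the \emph{main terms} $\sum\chi(d_1)\chi(d_2)/\varphi(d_1d_2h)$ via Lemmas \ref{Hooley14} and \ref{Hooley15}; in the second case one sums one of the divisor variables over the residue classes $\pm\lambda\pmod{4\lambda}$ so that the two main terms cancel identically, leaving only the error terms of Lemma \ref{Hooley11}. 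Without this gcd decomposition and the exact pairing of main terms, your second-moment step fails. (The dyadic splitting is also unnecessary, since the whole range is only $(\log X)^{2\omega}$ wide.)
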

\begin{proof}
The proof of very similar result (with $\omega$ = 48 and with the condition $d \, |\, N-p$ rather than $d \,| \, p-1$) is available in (\cite{Hooley}, Chapter 5). 
The reader will easily see that the methods used there yield also the validity of Lemma \ref{Hooley1}.
\end{proof}

\begin{lemma}\label{Brun-Titchmarsh} (Brun -- Titchmarsh theorem)
Let $q<X$ and $(q, a)=1$. Then
\begin{equation*}
\sum\limits_{p\leq X\atop{p\equiv a\, (q)}}1<\frac{2X}{\varphi(q)\log\frac{ 2X}{q}}\,.
\end{equation*}
\end{lemma}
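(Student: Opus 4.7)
My plan is to recall the classical argument of Montgomery and Vaughan based on the Selberg upper bound sieve. Write $\mathcal{A}=\{n\le X:n\equiv a\,(q)\}$, so that $|\mathcal{A}|=X/q+O(1)$. Any prime $p$ counted on the left-hand side with $p>\sqrt{X}$ has no prime factor $\ell\le\sqrt{X}$ other than itself, so bounding the sifted set $S(\mathcal{A},\sqrt{X})$ (sifting by primes $\ell\nmid q$) controls the count of primes up to an admissible $O(\sqrt{X})$ term arising from the primes $p\le\sqrt{X}$.

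Next I would set up the sieve data: for squarefree $d$ with $(d,q)=1$ one has $|\mathcal{A}_d|=X/(qd)+r_d$ with $|r_d|\le 1$, corresponding to the multiplicative local density $\omega(p)=1$ for $p\nmid q$ and $\omega(p)=0$ for $p\mid q$. The Selberg $\Lambda^2$-sieve with level of distribution $D_0=(X/q)^{1/2}$ then yields
\[
S(\mathcal{A},D_0)\le\frac{X/q}{G(D_0)}+O\!\Bigl(\sum_{d\le D_0^{2}}\tau_{3}(d)\Bigr),\qquad
G(D_0)=\sum_{\substack{d\le D_0\\(d,q)=1}}\mu^{2}(d)\prod_{p\mid d}\frac{1}{p-1}.
\]
A standard Mertens-type calculation, isolating the Euler factors at primes dividing $q$, gives $G(D_0)\ge(\varphi(q)/q)\log D_0$, and the error term is absorbed by a suitable choice of $D_0$, producing an upper bound of shape $2X/(\varphi(q)\log(X/q))$.

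The main obstacle is not conceptual but numerical: in order to obtain the precise constant $2$ (rather than $2+o(1)$) and the denominator $\log(2X/q)$ that keeps the bound nontrivial all the way up to $q<X$, one has to evaluate $G$ carefully at the endpoint of the summation range and choose the Selberg weights optimally. This is exactly the refinement carried out by Montgomery and Vaughan. Since this is a very classical and well-documented inequality, in the final write-up I would simply cite their original result rather than reproduce the bookkeeping in detail.
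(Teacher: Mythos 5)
Your proposal is correct and ends up in the same place as the paper, which proves this lemma simply by citing Montgomery and Vaughan's Theorem 2 from \emph{The large sieve}; your concluding decision to cite their result for the sharp constant $2$ and the denominator $\log(2X/q)$ is exactly what the paper does. The preliminary Selberg-sieve sketch is accurate standard background (though Montgomery and Vaughan's actual argument proceeds via the large sieve rather than the $\Lambda^2$-sieve), and nothing in it is needed beyond the citation.
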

\begin{proof}
See (\cite{Montgomery-Vaughan}, Theorem 2).
\end{proof}
Using Brun -- Titchmarsh theorem one can prove the following lemma.
\begin{lemma}\label{Hooley2}
Let $r<n/2$ and $\mathcal{N}(n,r)$ is the number of solutions of the equation
\begin{equation*} 
p_1+rp_2=n 
\end{equation*}
in primes $p_1,p_2$. Then 
\begin{equation*}
\mathcal{N}(n,r)\ll\frac{n^2}{\varphi(nr)\log^2\frac{n}{r}}\,.
\end{equation*}
\end{lemma}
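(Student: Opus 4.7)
The plan is to combine Selberg's upper-bound sieve with Lemma \ref{Brun-Titchmarsh} in a classical two-step argument. I would start from
\[
\mathcal{N}(n,r) = \sum_{\substack{p_2 \le (n-2)/r \\ p_2 \text{ prime}}} \mathbf{1}\{n - r p_2 \text{ is prime}\}
\]
and attack the inner primality condition on $n - r p_2$ by sifting with primes $p \le D$, where $D$ is a small power of $n/r$ (say $D = (n/r)^{1/3}$). Since $p_2 \le (n-2)/r < n/r$, this is well-defined.

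The input to the sieve is, for each squarefree $d \le D$ with $(d, nr) = 1$, the count
\[
\Psi_d := \#\{p_2 \le (n-2)/r : p_2 \text{ prime},\ d \mid n - r p_2\},
\]
which equals the number of primes $p_2 \le (n-2)/r$ lying in the progression $p_2 \equiv n r^{-1} \pmod d$. Lemma \ref{Brun-Titchmarsh} applied with modulus $d$ yields
\[
\Psi_d \ll \frac{n/r}{\varphi(d)\,\log(n/(rd))},
\]
so the Brun -- Titchmarsh input already saves one factor of $\log(n/r)$. Inserting these bounds into Selberg's upper-bound sieve weights and summing over squarefree $d \le D$ in the standard way produces a further saving of $\log(n/r)$, yielding
\[
\mathcal{N}(n,r) \ll \frac{n/r}{(\log(n/r))^{2}} \prod_{p \mid nr}\Bigl(1 - \tfrac{1}{p}\Bigr)^{-1} = \frac{n^{2}}{\varphi(nr)\,(\log(n/r))^{2}}.
\]
The singular-series factor $\prod_{p \mid nr}(1 - 1/p)^{-1} = nr/\varphi(nr)$ encodes the contribution of those primes $p \mid nr$ that are invisible to the sieve (for such $p$ the condition $p \mid n - r p_2$ is either automatic or equivalent to $p \mid p_2$, forcing at most one admissible value of $p_2$).

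The main technical point is bookkeeping these bad primes $p \mid nr$ carefully, so that the final bound features exactly $nr/\varphi(nr)$ rather than a weaker quantity like $nr/(\varphi(n)\varphi(r))$; when $(n,r) > 1$ the count $\mathcal{N}(n,r)$ is in fact $O(1)$ and the bound is trivial, while the case $(n,r) = 1$ requires only the standard Selberg optimization. Otherwise the argument is routine and uses no new ingredients beyond Lemma \ref{Brun-Titchmarsh}.
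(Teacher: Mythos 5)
The paper does not actually prove this lemma: it simply cites Lemma 2 of Hooley's monograph. There the bound comes from a single application of an upper--bound sieve (Brun/Selberg) to the \emph{two--dimensional} problem: one sifts the integer sequence $\{\,m(n-rm)\;:\;1\le m\le (n-2)/r\,\}$ by all primes $p\le z$ with $z$ a small power of $n/r$. For squarefree $d$ the number of $m\le y$ with $d\mid m(n-rm)$ is $\frac{\rho(d)}{d}y+\mathcal{O}(\rho(d))$, where $\rho$ is multiplicative with $\rho(p)=2$ for $p\nmid nr$ and $\rho(p)=1$ for odd $p\mid nr$; the sieve then produces the saving $\log^{-2}(n/r)$ together with the factor $\prod_{p\mid nr}(1-1/p)^{-1}\asymp nr/\varphi(nr)$, which is exactly the stated bound. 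This exact main term plus small remainder for each $d$ is what the sieve machinery requires.

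Your two-step scheme has a genuine gap at its central step. Selberg's upper bound is the quadratic form $\sum_{d_1,d_2\le D}\lambda_{d_1}\lambda_{d_2}\Psi_{[d_1,d_2]}$ with weights $\lambda_d$ of \emph{mixed sign}, so it cannot consume one-sided estimates: you need $\Psi_d=g(d)\Psi_1+R_d$ with a multiplicative density $g$ and remainders that are genuinely smaller than the main term (at least on average over $d\le D$). Brun--Titchmarsh gives only the upper bound $\Psi_d\ll \frac{n/r}{\varphi(d)\log(n/(rd))}$, and ``inserting these bounds into the Selberg weights and summing'' is not a legitimate operation --- the negative $\lambda_d$ make the resulting expression uncontrollable, and if instead you try to treat $\Psi_d-\frac{1}{\varphi(d)}\pi(n/r)$ as a remainder, Brun--Titchmarsh only tells you it is of the same order as the main term, so $\sum_{d\le D}|R_d|$ swamps everything and no second logarithm is saved. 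To sift only over the variable $p_2$ you would need an asymptotic for primes in progressions to moduli up to a power of $n/r$, i.e.\ a Bombieri--Vinogradov-type input uniform in $r$, which is a far heavier tool than the lemma warrants and is not what Lemma \ref{Brun-Titchmarsh} provides. The repair is to abandon the two-step scheme and sift $m(n-rm)$ over all integers $m$ as above, where the congruence counts have exact main terms; your bookkeeping of the primes $p\mid nr$ and the reduction of the case $(n,r)>1$ (where indeed $p_1$ is forced to equal the residue of $n$ modulo $r$, so $\mathcal{N}(n,r)\le 1$) are fine and carry over to that argument.
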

\begin{proof}
See (\cite{Hooley}, Lemma 2).
\end{proof}

\begin{lemma}\label{Hooley11}
Let $y\leq X$, $0<\delta<1$, $k=\mathcal{O}\big(X^\delta\big)$ and $a=\prod\limits_{p_i} p^{a_i}_i$. Then for the function $f(n)$  defined by  \eqref{fn} we have
\begin{equation*}
\sum\limits_{n<y\atop{n\equiv a\, (k)}}f(n)
=\begin{cases}\frac{1}{\varphi(k)}B(X)y+\mathcal{O}\left(\frac{X}{k\log^5X}\right),\;\; \mbox{ if } \;\; (a^{(1)}, k)=1\,,\\
\hspace{24mm}\mathcal{O}\left(\frac{X}{k\log^5X}\right),\;\; \mbox{ if } \;\; (a^{(1)}, k)>1\,,
\end{cases}
\end{equation*}
where
\begin{equation*}
B(X)\ll\frac{(\log\log X)^2}{\log X}\,,
\end{equation*}
\begin{equation*}
a^{(1)}=\prod_{p_i\leq Y} p^{a_i}_i\,,
\end{equation*}
where the implied constants are absolute.
\end{lemma}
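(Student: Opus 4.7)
The plan is to decompose $f=g+h$ and treat each summand separately. The contribution from $g$ is handled trivially: since $g$ is supported on primes up to $Y$, one has $\sum_{n<y,\,n\equiv a\,(k)}g(n)\leq \pi(Y)\ll Y/\log Y$, which is far smaller than $X/(k\log^5 X)$ for any $k=\mathcal{O}(X^\delta)$ with $\delta<1$. Hence all the work concentrates in estimating $\sum_{n<y,\,n\equiv a\,(k)}h(n)$, i.e., the count of integers in the progression $\{a+kt:0\leq t<y/k\}$ that are coprime to $P$.

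Write $k=k_1k_2$, where $k_1$ collects the prime factors of $k$ that are at most $Y$ and $k_2$ the remaining ones. For every prime $p\leq Y$ dividing $k_1$ one has $p\mid a+kt$ iff $p\mid a$. Thus if $(a^{(1)},k)>1$, some such prime divides $a$ and forces $h(n)=0$ identically on the progression, so the sum reduces to a trivial boundary contribution absorbed by the stated error term; this yields the second branch of the lemma. Under $(a^{(1)},k)=1$, on the other hand, the primes dividing $k_1$ impose no further condition, and counting integers in the progression coprime to $P$ becomes a sieve problem: sift an arithmetic progression of length $y/k$ by the primes $p\leq Y$ with $p\nmid k$, each eliminating one residue class.

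For this I would apply the Fundamental Lemma of sieve theory (e.g.\ in the Rosser--Iwaniec form) with sieve cutoff $D^{\ast}=X^{1/\log\log X}$. Because $\log D^{\ast}/\log Y\gg\log\log X$, the truncation error is $\ll (y/k)(\log X)^{-5}$, while the accumulated $O(1)$ remainders from counting integers in residue classes modulo $d\leq D^{\ast}$ with $d\mid P$ contribute at most $D^{\ast}=X^{1/\log\log X}\ll X/(k\log^{5}X)$. The main term produced is
\begin{equation*}
\frac{y}{k}\prod_{\substack{p\leq Y\\ p\nmid k}}\Big(1-\frac{1}{p}\Big)=\frac{y}{k}\cdot B(X)\cdot\frac{k_1}{\varphi(k_1)}=\frac{yB(X)}{\varphi(k_1)\,k_2},
\end{equation*}
where setting $B(X)=\prod_{p\leq Y}(1-1/p)$ and invoking Mertens' theorem together with $\log Y=\log X/(\log\log X)^2$ gives $B(X)\ll(\log\log X)^2/\log X$, as required.

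It then remains to reconcile this quantity with the stated main term $yB(X)/\varphi(k)=yB(X)/(\varphi(k_1)\varphi(k_2))$. Their difference equals $(yB(X)/\varphi(k))\bigl(1-\varphi(k_2)/k_2\bigr)\ll (yB(X)/\varphi(k))\sum_{p\mid k_2}p^{-1}$; since every prime of $k_2$ exceeds $Y$ and $\Omega(k_2)\ll(\log X)/(\log Y)=(\log\log X)^2$, this inner sum is $\ll(\log\log X)^2/Y$, and as $Y$ grows faster than any fixed power of $\log X$ the whole discrepancy is absorbed in the $X/(k\log^5 X)$ error. The main technical obstacle is the careful execution of the Fundamental Lemma uniformly in $a$ and $k$ after first pre-removing the primes dividing $k_1$ from the sieving set, so that the residual sieve is unaffected by the modulus of the progression.
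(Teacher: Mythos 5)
The paper does not actually prove this lemma: its ``proof'' is a bare citation of Lemma~11 in Chapter~5 of \cite{Hooley}, so there is no argument in the text to compare yours against step by step. Your reconstruction is nonetheless sound and follows the same circle of ideas as Hooley's original: the $g$-contribution is trivially $O(\pi(Y))=O(Y)=O\big(X^{1/(\log\log X)^2}\big)$, which is negligible against $X/(k\log^5X)$ for $k=\mathcal{O}(X^\delta)$ with $\delta<1$; the case $(a^{(1)},k)>1$ annihilates $h$ on the whole progression; and in the remaining case one sifts a progression of length $y/k$ by the primes $p\le Y$ with $p\nmid k$, which is squarely in the fundamental-lemma regime because $Y$ is so small. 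Two quantitative points in your sketch deserve to be made explicit. First, to reach the error $X/(k\log^5X)$ you genuinely need the strong form of the Fundamental Lemma with saving $e^{-s\log s}$ (equivalently $s^{-s}$), not merely $1+\mathcal{O}(e^{-s})$: with $D^{\ast}=X^{1/\log\log X}$ one has $s=\log\log X$ and $e^{-s}=1/\log X$, which multiplied by the main term $(y/k)\prod_{p\le Y,\,p\nmid k}(1-1/p)\asymp (y/k)(\log\log X)^2/\log X$ falls short; the Rosser--Iwaniec/beta-sieve form you invoke does supply the stronger saving (alternatively, take $D^{\ast}$ equal to a small fixed power of $X$ below $X^{1-\delta}$). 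Second, your reconciliation of $yB(X)/(\varphi(k_1)k_2)$ with $yB(X)/\varphi(k)$ is both necessary and correct: every prime of $k_2$ exceeds $Y$ and $\Omega(k_2)\ll(\log\log X)^2$, so the discrepancy is $\ll yB(X)(\log\log X)^2/(\varphi(k)Y)$ and is absorbed by the error term since $Y$ exceeds every fixed power of $\log X$. With those two points spelled out, your argument is a complete, self-contained proof of the lemma, uniform in $a$ and in $k=\mathcal{O}(X^\delta)$ as the statement requires, and it has the advantage over the paper of not outsourcing the entire content to a reference.
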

\begin{proof}
See (\cite{Hooley}, Lemma 11).
\end{proof}

\begin{lemma}\label{Hooley12}
Let $\frac{1}{2}\leq a\leq\frac{7}{4}$. Then
\begin{equation*}
\sum\limits_{n\leq y} a^{\Omega(n)} \ll y (\log2y)^{a-1}  \,.
\end{equation*}
\end{lemma}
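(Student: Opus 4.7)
My plan is to bound $\sum_{n \le y} a^{\Omega(n)}$ by treating it as a sum of values of a non-negative multiplicative function and invoking a standard mean-value estimate. The function $f(n) := a^{\Omega(n)}$ is completely multiplicative with $f(p) = a$ and $f(p^k) = a^k$, and since $a \le 7/4 < 2$, the local Euler factor $(1 - a/p)^{-1}$ is positive and finite at every prime $p \ge 2$.

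With this in hand, I would appeal to the Halberstam--Richert mean-value theorem for non-negative multiplicative functions (see \emph{Sieve Methods}, Theorem~01), which yields
\[
\sum_{n \le y} f(n) \ll \frac{y}{\log y}\prod_{p \le y}\bigg(1 + \sum_{k \ge 1}\frac{f(p^k)}{p^k}\bigg).
\]
Specializing to $f(n) = a^{\Omega(n)}$, the Euler product collapses to $\prod_{p \le y}(1 - a/p)^{-1}$. A direct application of Mertens' theorem then gives
\[
\prod_{p \le y}\Bigl(1 - \frac{a}{p}\Bigr)^{-1} = \exp\!\bigg(\sum_{p \le y}\frac{a}{p} + O(1)\bigg) \ll (\log y)^{a},
\]
so combining the two estimates produces $\sum_{n \le y} a^{\Omega(n)} \ll y(\log y)^{a-1}$, which upgrades to $y(\log 2y)^{a-1}$ once the trivial range of small $y$ is absorbed into the implied constant.

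The main technical point is the $1/\log y$ saving in the Halberstam--Richert bound, without which one would obtain an exponent of $a$ rather than $a-1$ on the logarithm. A naive Rankin-type inequality $\sum_{n \le y} a^{\Omega(n)} \le y^{1-\sigma} F(\sigma)$ with $F(\sigma) = \sum_n a^{\Omega(n)}/n^{\sigma}$ and $\sigma = 1 - c/\log y$ does not achieve this; the refined fundamental lemma for mean values of multiplicative functions is exactly what is needed. Everything else reduces to a routine computation with Euler products and Mertens' estimates, so the lemma follows as a standard consequence of this classical estimate.
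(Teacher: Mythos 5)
Your argument is correct, but it is worth noting that the paper does not actually prove this lemma at all: it simply cites Hooley's Lemma 12 (whose original argument is an elementary one) and, for a sharper asymptotic version, Tenenbaum's treatment of $\sum_{n\le y} z^{\Omega(n)}$ via the Selberg--Delange method. Your route through the Halberstam--Richert mean-value theorem for non-negative multiplicative functions is a legitimate and standard alternative: the hypotheses are satisfied because $f(n)=a^{\Omega(n)}$ is completely multiplicative with $f(p^k)=a^k$ and $a\le 7/4<2$, so the local factors $(1-a/p)^{-1}$ converge and the second Halberstam--Richert condition $\sum_p\sum_{k\ge2}f(p^k)p^{-k}\log p^k<\infty$ holds; Mertens then gives $\prod_{p\le y}(1-a/p)^{-1}\ll(\log y)^a$, and the crucial factor $y/\log y$ in the mean-value bound delivers the exponent $a-1$. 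You are also right that a bare Rankin trick would lose a power of $\log$. What your approach buys is a clean, citable one-line derivation from a general theorem; what the cited sources buy is either a self-contained elementary proof (Hooley) or a genuine asymptotic formula rather than an upper bound (Tenenbaum), neither of which is needed for the application in this paper. The only caveat is bibliographic: make sure the precise statement you invoke from \emph{Sieve Methods} (or from Tenenbaum, Chapter III.3, where the Halberstam--Richert theorem appears) covers the range up to $a=7/4$ with the constant uniform in $a$, which it does.
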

\begin{proof}
See (\cite{Hooley}, Lemma 12). For more general result we refer to (\cite{Tenenbaum}, Chapter II.6, Theorem 2).

\end{proof}

\begin{lemma}\label{Hooley13}
Let $\frac{1}{2}\leq \alpha<1$, $\omega>0$ and $y>e^e$. Then 
\begin{equation*}
\sum\limits_{\sqrt{y}(\log y)^{-\omega}<n<\sqrt{y}(\log y)^\omega\atop{\Omega(n)\leq \alpha\log\log y}}\frac{1}{n}\ll (\log y)^{\gamma_\alpha-1}\log\log y\,,
\end{equation*}
where $\gamma_\alpha=\alpha-\alpha\log \alpha$.
\end{lemma}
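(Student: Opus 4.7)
The plan is to apply a Rankin-style trick to remove the constraint $\Omega(n)\leq\alpha\log\log y$. For any parameter $\lambda\in[1/2,1)$, the condition $\Omega(n)\leq\alpha\log\log y$ together with $\log\lambda<0$ yields
\[
1\leq\lambda^{\Omega(n)-\alpha\log\log y}=\lambda^{\Omega(n)}(\log y)^{-\alpha\log\lambda}.
\]
Writing $I=(\sqrt y(\log y)^{-\omega},\,\sqrt y(\log y)^\omega)$, this gives
\[
\sum_{\substack{n\in I\\ \Omega(n)\leq\alpha\log\log y}}\frac{1}{n}\leq(\log y)^{-\alpha\log\lambda}\sum_{n\in I}\frac{\lambda^{\Omega(n)}}{n}.
\]

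Next I would bound the unconstrained sum by partitioning $I$ into dyadic blocks $[M,2M]$. The ratio of endpoints of $I$ is $(\log y)^{2\omega}$, so the number of blocks is $O(\log\log y)$, and $\log M\asymp\log y$ on every block. Since $1/2\leq\lambda<1\leq 7/4$, Lemma \ref{Hooley12} is applicable and produces
\[
\sum_{M\leq n<2M}\frac{\lambda^{\Omega(n)}}{n}\leq\frac{1}{M}\sum_{n\leq 2M}\lambda^{\Omega(n)}\ll(\log 2M)^{\lambda-1}\ll(\log y)^{\lambda-1}.
\]
Adding up the $O(\log\log y)$ blocks yields $\sum_{n\in I}\lambda^{\Omega(n)}/n\ll(\log y)^{\lambda-1}\log\log y$, whence
\[
\sum_{\substack{n\in I\\ \Omega(n)\leq\alpha\log\log y}}\frac{1}{n}\ll(\log y)^{F(\lambda)}\log\log y,\qquad F(\lambda):=\lambda-1-\alpha\log\lambda.
\]

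Finally, I would optimize the exponent $F(\lambda)$ over $\lambda\in(0,1)$. Since $F'(\lambda)=1-\alpha/\lambda$ vanishes at $\lambda=\alpha$ and $F''(\alpha)=1/\alpha>0$, the minimum is attained at $\lambda=\alpha$, giving
\[
F(\alpha)=\alpha-1-\alpha\log\alpha=\gamma_\alpha-1,
\]
which is precisely the exponent in the statement. The hypothesis $\alpha\in[1/2,1)$ is exactly what is needed to take $\lambda=\alpha$ inside the range $[1/2,7/4]$ required by Lemma \ref{Hooley12}.

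The main obstacle is conceptual rather than technical: one must recognize that the optimal Rankin parameter is $\lambda=\alpha$, which tunes the loss from trivially bounding the $\Omega$-constraint against the gain from Lemma \ref{Hooley12}. A secondary subtlety is to handle the sum on $I$ dyadically rather than by partial summation starting at $n=1$; the latter would cost an extra factor of $\log y$, whereas the dyadic approach, exploiting that $I$ is centered at $\sqrt y$ with multiplicative width only $(\log y)^{2\omega}$, loses merely the advertised $\log\log y$.
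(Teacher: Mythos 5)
Your proof is correct, and it is essentially the same argument as the one the paper relies on: the paper simply cites Hooley's Lemma 13, whose proof is exactly this Rankin-type bound $1\leq\lambda^{\Omega(n)-\alpha\log\log y}$ with the weight $\lambda^{\Omega(n)}$ controlled by Lemma \ref{Hooley12} over the short range around $\sqrt{y}$ (Hooley takes $\lambda=\alpha$ outright, which you recover by optimizing $F(\lambda)$). The only cosmetic difference is your dyadic decomposition in place of partial summation restricted to the interval $I$; both yield the same $\log\log y$ factor.
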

\begin{proof}
The proof of very similar result (with $\omega$ = 48) is available in (\cite{Hooley}, Lemma 13). 
Inspecting the arguments presented in (\cite{Hooley}, Lemma 13), the reader will readily see that the proof of Lemma \ref{Hooley13} can be obtained is the same manner.
\end{proof}

\begin{lemma}\label{Hooley14}
Let $(rs, n)=1$ and $r,s,n\leq X$. Then 
\begin{align*}
\sum\limits_{l\geq y\atop{(l, ns)=1}}\frac{\chi(l)}{\varphi(rsl)}=\mathcal{O}\Big((\log\log X)R_n(r,s,y)\Big)&+\mathcal{O}\Bigg((\log\log X)\frac{\sigma_{-1}(s)}{rs}\sigma_{-1}(n,y)\Bigg)\\
&+\mathcal{O}\Bigg(\frac{(\log\log X)^2}{rsy}\Bigg)\,,
\end{align*}
where 
\begin{align}
\label{sigman}
&\sigma_{-1}(n)=\sum\limits_{d|n}\frac{1}{d}\,;\\
\label{sigmany}
&\sigma_{-1}(n,y)=\sum\limits_{d|n\atop{d>y}}\frac{1}{d}\,;
\end{align}

\begin{align}
\label{Rnrsy}
&R_n(r,s,y)=\frac{\log 2y}{y}\frac{\tau_2(s)}{rs}\tau(n,y)\,;\\
\label{tauny}
&\tau(n,y)=\sum\limits_{d|n\atop{d\leq y}}1\,,
\end{align}
where the implied constants are absolute.
\end{lemma}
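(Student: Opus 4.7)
The strategy is to reduce this weighted tail sum to a standard partial-summation estimate for $\sum_m \chi(m)/m$ via two successive Möbius-type decompositions of $1/\varphi$, and then to control the resulting arithmetic sums by Mertens-type inequalities. The main analytic input is the bound $\big|\sum_{m\le M}\chi(m)\big|\le 1$ for the non-principal character mod $4$.

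First, since $(l,s)=1$, I would use the identity $\varphi(ab)=\varphi(a)\varphi(b)(a,b)/\varphi((a,b))$ together with $\prod_{p\mid m}(1-1/p)=\sum_{d\mid m}\mu(d)/d$ to write
$$\frac{1}{\varphi(rsl)}=\frac{1}{\varphi(rs)\varphi(l)}\sum_{d\mid (r,l)}\frac{\mu(d)}{d},$$
which pulls out $1/\varphi(rs)$ and isolates the interaction between $r$ and $l$. Interchanging summation sends $d$ outside, with $d$ squarefree and $d\mid r$; the inner sum becomes $\sum_{l\ge y,\,(l,ns)=1,\,d\mid l}\chi(l)/\varphi(l)$. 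Applying the further expansion $1/\varphi(l)=(1/l)\sum_{k\mid l}\mu^2(k)/\varphi(k)$ converts the weight $1/\varphi$ into $1/l$ at the cost of a second squarefree variable $k$, reducing the innermost object to a sum of the shape $\sum_m \chi(m)/m$.

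To complete the reduction I would then remove the coprimality condition $(l,n)=1$ by Möbius inversion $\mathbf{1}_{(l,n)=1}=\sum_{e\mid (l,n)}\mu(e)$, introducing a sum over $e\mid n$. The innermost tail $\sum_{m\ge T,\,(m,N)=1}\chi(m)/m$, with $T$ and $N$ depending on the outer variables, is then bounded by partial summation: $O(\log(2T)/T)$ up to inclusion-exclusion factors attached to the divisors of $s$. Summing over $d\mid r$ (which combines with the $s$-divisor structure to produce the $\tau_2(s)$ weight), over squarefree $k$ (which contributes bounded constants via Mertens), and over divisors $e\mid n$ with $e\le y$ yields the first error term $(\log\log X)R_n(r,s,y)$. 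Divisors $e\mid n$ with $e>y$ fall outside the range where partial summation is efficient and must be bounded trivially using $\sum_{m}1/(\varphi(m)m)\ll 1$, producing the second error $(\log\log X)\sigma_{-1}(s)\sigma_{-1}(n,y)/(rs)$. The parameter range where the effective lower bound $T$ drops below $1$ is handled by a direct absolute estimate and generates the residual third term $(\log\log X)^2/(rsy)$.

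The principal obstacle is bookkeeping rather than analysis: several nested Möbius expansions over divisors of $r$, $s$ and $n$ interact, and one has to keep track of which arithmetic weight — $\tau_2(s)$, $\sigma_{-1}(s)$, $\tau(n,y)$, $\sigma_{-1}(n,y)$ — attaches to which variable while tracking the $\log\log X$ factors. Each $\log\log X$ arises either from a prime-reciprocal sum $\sum_{p\mid rsn}1/p\ll \log\log X$ or from a product $\prod_p(1+O(1/p))\ll (\log\log X)^{O(1)}$, and verifying that the final powers match the statement — one in each of the first two error terms and two in the third — requires careful combinatorial accounting at every stage of the Möbius unfolding.
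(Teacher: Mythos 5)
The paper does not prove this lemma at all: it is quoted verbatim from Hooley's monograph (Lemma~14 of \cite{Hooley}, Chapter~2), and the ``proof'' is just that citation. Your sketch reconstructs essentially the argument Hooley actually gives there, and the two identities you rely on are correct: since $(l,ns)=1$ forces $(rs,l)=(r,l)$, one indeed has $\varphi(rsl)=\varphi(rs)\varphi(l)(r,l)/\varphi((r,l))$ and $\varphi(m)/m=\sum_{d\mid m}\mu(d)/d$, and likewise $l/\varphi(l)=\sum_{k\mid l}\mu^2(k)/\varphi(k)$; combined with $\bigl|\sum_{m\le M}\chi(m)\bigr|\le 1$ and Mertens-type bounds this is the right skeleton, and the attribution of the three error terms to the ranges $e\le y$, $e>y$, and $T<1$ matches the shape of the stated bound. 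Two caveats. First, your accounting of the $\log 2y$ factor in $R_n(r,s,y)$ is slightly misplaced: the character tail $\sum_{m\ge T}\chi(m)/m$ is $O(1/T)$ by partial summation with no logarithm; the $\log 2y$ enters from the truncated second expansion, via $\sum_{k\le y}\mu^2(k)/\varphi(k)\ll\log 2y$ (the estimate recorded as Lemma~\ref{Murty} in this paper). Second, the sum over $l\ge y$ converges only conditionally, so the several interchanges of summation must be performed on the partial sums $y\le l\le L$ before letting $L\to\infty$; this is routine but should be said. Beyond that, what remains is exactly the combinatorial bookkeeping you acknowledge deferring, which is where the entire content of the lemma sits, so as written this is a correct plan rather than a proof.
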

\begin{proof}
See (\cite{Hooley}, Lemma 14).
\end{proof}

\begin{lemma}\label{Hooley15}
Let $1<u<X$, $u'\geq u$, $\omega>0$ and $n\leq X$. Then 
\begin{align*}
&\sum\limits_{h\leq u}\sum\limits_{u/h<d<u(\log X)^\omega /h} R_n(h,d,u'/h) \ll (\log\log X)^4\,,\\
&\sum\limits_{h\leq u}\sum\limits_{u/h<d<u(\log X)^\omega /h} \frac{\sigma_{-1}(d)}{hd}\sigma_{-1}(n,u'/h) \ll (\log\log X)^3\,,\\
&\sum\limits_{h\leq u}\sum\limits_{u/h<d<u(\log X)^\omega /h} \frac{h}{u} \frac{1}{hd}\ll \log\log X\,.
\end{align*}
\end{lemma}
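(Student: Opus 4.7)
The three estimates share a common structure: an inner sum over $d$ in the interval $I_h := (u/h, u(\log X)^\omega/h)$, of logarithmic length $\omega \log\log X$, followed by a sum over $h \leq u$. I would treat them in order of increasing difficulty.

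For the third inequality, note that $(h/u) \cdot 1/(hd) = 1/(ud)$, so the inner sum is $\sum_{d \in I_h} 1/d \ll \omega \log\log X$ by comparison with the logarithmic integral, and summing $1/u$ over $h \leq u$ gives $\ll \log\log X$. For the second inequality, I would expand $\sigma_{-1}(d) = \sum_{e \mid d} 1/e$ and substitute $d = ek$ in the inner sum, reducing it to $\sum_e e^{-2} \sum_{k \in I_h/e} 1/k$; each inner sum over $k$ is $O(\log\log X)$ and $\sum_e e^{-2} = O(1)$, yielding $\sum_{d \in I_h} \sigma_{-1}(d)/d \ll \log\log X$. Expanding the tail divisor sum $\sigma_{-1}(n, u'/h) = \sum_{f \mid n,\, f > u'/h} 1/f$ and swapping with the $h$-sum yields $\sum_{f \mid n,\, f > u'/u} (\log(uf/u'))/f$, which is $O((\log\log X)^2)$ by Gronwall's theorem ($\sigma_{-1}(n) \ll \log\log n$) together with $\sum_{p \mid n} (\log p)/p \ll \log\log n$ (a consequence of Mertens restricted to the prime divisors of $n$).

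For the first (and hardest) inequality, the analogous divisor expansion applied to $\tau(d) = \sum_{e \mid d} 1$ yields $\sum_{d \in I_h} \tau(d)/d \ll (\log\log X) \log(u'/h)$, with an extra factor of $\log(u'/h)$ since $\sum_e 1/e$ diverges logarithmically. Substituting into $R_n$ and using the built-in weight $\log(2u'/h)/(u'/h)$ to absorb the $\log(u'/h)$, one reduces to bounding $(u')^{-1} \sum_{h \leq u} (\log(2u'/h))^2 \tau(n, u'/h)$. Swapping the $h$-sum with the divisors $f \mid n$ and splitting at $f = u'/u$ produces terms bounded using $\sum_{f \mid n} (\log f)^k / f \ll (\log\log X)^{k+1}$ for $k = 0, 1, 2$ (again from Gronwall and Mertens), giving the required $(\log\log X)^4$.

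The main obstacle is the first inequality: naively $\sum_{d \in I_h} \tau(d)/d$ contains a factor of $\log(u/h)$ that can be as large as $\log X$, and this must be absorbed by the weight $\log(2u'/h)/(u'/h)$ built into $R_n$. The careful bookkeeping of the triple summation over $h$, $d$, and the divisors of $n$, together with the resulting cancellation, is exactly the content of \cite[Lemma 15]{Hooley}, and the proof can be completed by following that reference verbatim.
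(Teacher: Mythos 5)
Your proposal is correct and follows essentially the same route as the paper, which gives no argument of its own beyond citing Hooley's Lemma 15 and asserting that the method extends from $\omega=96$ to general $\omega$; your divisor-sum expansions and order-of-summation swaps are precisely Hooley's method, and like the paper you defer to that reference for the delicate bookkeeping in the first estimate. If anything you supply more detail than the paper does, correctly isolating the first inequality as the only nontrivial one (where one should note that in all applications here $u'/u$ is a fixed power of $\log X$, which is what makes the $f<u'/u$ range of divisors harmless).
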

\begin{proof}
The proof of very similar result (with $\omega$ = 96) is available in (\cite{Hooley}, Lemma 15).
The reader will easily see that the methods used there yield also the validity of Lemma \ref{Hooley15}.
\end{proof}

\begin{lemma}\label{Murty}
Let $X\geq2$. Then
\begin{equation*}
\sum\limits_{n\leq X}\frac{1}{\varphi(n)}\ll \log X\,.
\end{equation*}
\end{lemma}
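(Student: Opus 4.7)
The plan is to reduce the estimate to the well-known bound for the harmonic sum via the multiplicative identity
\[
\frac{n}{\varphi(n)}=\sum_{d\mid n}\frac{\mu^2(d)}{\varphi(d)},
\]
which follows immediately from the Euler product expansion of $n/\varphi(n)=\prod_{p\mid n}(1-1/p)^{-1}$. First I would rewrite $1/\varphi(n)$ as $(1/n)\cdot n/\varphi(n)$ and substitute the identity above, so that
\[
\sum_{n\leq X}\frac{1}{\varphi(n)}=\sum_{n\leq X}\frac{1}{n}\sum_{d\mid n}\frac{\mu^2(d)}{\varphi(d)}.
\]

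Next I would swap the order of summation, writing $n=dm$, to obtain
\[
\sum_{n\leq X}\frac{1}{\varphi(n)}=\sum_{d\leq X}\frac{\mu^2(d)}{d\,\varphi(d)}\sum_{m\leq X/d}\frac{1}{m}.
\]
The inner sum is at most $\log X+O(1)$, so it remains to bound the outer tail $\sum_{d\leq X}\mu^2(d)/(d\varphi(d))$. Since $\mu^2(d)/(d\varphi(d))$ is multiplicative, extending the sum to all $d\geq 1$ gives the Euler product
\[
\prod_{p}\left(1+\frac{1}{p(p-1)}\right),
\]
which converges absolutely because the local factors differ from $1$ by $O(1/p^2)$. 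Therefore the outer sum is bounded by an absolute constant, and combining with the $\log X$ from the inner sum yields the claim.

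No genuine obstacle arises here; the only mild care needed is to note that the convergence of the Euler product produces an absolute implied constant, independent of $X$, which is exactly what the statement requires.
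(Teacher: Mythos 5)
Your argument is correct: the identity $n/\varphi(n)=\sum_{d\mid n}\mu^2(d)/\varphi(d)$, the interchange of summation, and the convergence of $\sum_d \mu^2(d)/(d\varphi(d))$ via the Euler product $\prod_p\bigl(1+\tfrac{1}{p(p-1)}\bigr)$ all check out, and the implied constant is absolute as required. The paper gives no argument of its own here, merely citing Murty's book (Ch.\ 4, Ex.\ 4.4.14); what you have written is the standard proof that the cited exercise intends, so your proposal is a valid self-contained substitute for the citation.
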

\begin{proof}
See (\cite{Murty}, Ch .4, Ex. 4.4.14).
\end{proof}

\section{Estimation of $\mathbf{S_1(X)}$}\label{SectionS1}
\indent

Using \eqref{Bomb-Vin}, \eqref{Q}, \eqref{D}, \eqref{S1}, the trivial bound $|\chi(d)|\leq1$, the triangle inequality, Chinese remainder theorem, 
the elementary formula $[d,q]=d \, \frac{q}{(d,q)}\,, \Big(d, \, \frac{q}{(d,q)}\Big)=1$ and Lemma \ref{Murty} we obtain 
\begin{align*}
S_1(X)&=\sum\limits_{q\le Q\atop{(q, a)=1}}\Bigg|\sum\limits_{d\leq D}\chi(d)\Bigg(\sum\limits_{p\leq X\atop{p\equiv 1\, (d)\atop{p\equiv a\, (q)}}}1-\frac{1}{\varphi(q)}\sum\limits_{p\leq X\atop{p\equiv 1\, (d)}}1\Bigg)\Bigg|\nonumber\\
&\ll\sum\limits_{q\le Q}\sum\limits_{d\leq D}\Bigg|\sum\limits_{p\leq X\atop{p\equiv l(d,q)\, ([d,q])\atop{(l(d,q), [d,q])=1}}}1-\frac{1}{\varphi(q)}\sum\limits_{p\leq X\atop{p\equiv 1\, (d)}}1\Bigg|\nonumber\\
\end{align*}

\begin{align}\label{S1est1}
&\ll\sum\limits_{q\le Q}\sum\limits_{d\leq D\atop{(d, q)=1}}\Bigg|\sum\limits_{p\leq X\atop{p\equiv l(d,q)\, (dq)\atop{(l(d,q), dq)=1}}}1-\frac{1}{\varphi(q)}\sum\limits_{p\leq X\atop{p\equiv 1\, (d)}}1\Bigg|\nonumber\\
&\ll\sum\limits_{q\le Q}\sum\limits_{d\leq D\atop{(d, q)=1}}\Bigg|\frac{1}{\varphi(q)}\sum\limits_{p\leq X\atop{p\equiv 1\, (d)}}1-\frac{\pi(X)}{\varphi(d)\varphi(q)}\Bigg|
+\sum\limits_{q\le Q}\sum\limits_{d\leq D\atop{(d, q)=1}}\Bigg|\sum\limits_{p\leq X\atop{p\equiv l(d,q)\, (dq)\atop{(l(d,q), dq)=1}}}1-\frac{\pi(X)}{\varphi(d)\varphi(q)}\Bigg|\nonumber\\
&\ll\sum\limits_{q\le Q}\frac{1}{\varphi(q)}\sum\limits_{d\leq D}\Bigg|\sum\limits_{p\leq X\atop{p\equiv 1\, (d)}}1-\frac{\pi(X)}{\varphi(d)}\Bigg|+S'_1
\ll\frac{X}{(\log X)^{A+9}}\log Q+S'_1\nonumber\\
&\ll\frac{X\log\log X}{(\log X)^{A+9}}+S'_1(X)\,,
\end{align}
where
\begin{equation}\label{S'1}
S'_1(X)=\sum\limits_{q\le Q}\sum\limits_{d\leq D\atop{(d, q)=1}}\Bigg|\sum\limits_{p\leq X\atop{p\equiv l(d,q)\, (dq)\atop{(l(d,q), dq)=1}}}1-\frac{\pi(X)}{\varphi(d)\varphi(q)}\Bigg|\,.
\end{equation}
Bearing in mind \eqref{Bomb-Vin}, \eqref{Q}, \eqref{D}, \eqref{S'1}, Cauchy -- Schwarz inequality and the upper bounds 
\begin{equation*}
\sum\limits_{n\le X}\frac{\tau_2^2(n)}{n}\ll \log^4X\,,
\end{equation*}
\begin{equation*}
\Bigg|\sum\limits_{p\leq X\atop{p\equiv l(h)\, (h)\atop{(l(h), h)=1}}}1-\frac{\pi(X)}{\varphi(h)}\Bigg|\ll\frac{X}{h}
\end{equation*}
we get
\begin{align}\label{S'1est}
S'_1(X)&\ll\sum\limits_{h\le QD}\tau_2(h)\Bigg|\sum\limits_{p\leq X\atop{p\equiv l(h)\, (h)\atop{(l(h), h)=1}}}1-\frac{\pi(X)}{\varphi(h)}\Bigg|\nonumber\\
&\ll\left(\sum\limits_{h\le QD}\tau_2^2(h)\right)^{\frac{1}{2}}\left(\sum\limits_{h\le QD}\Bigg|\sum\limits_{p\leq X\atop{p\equiv l(h)\, (h)\atop{(l(h), h)=1}}}1-\frac{\pi(X)}{\varphi(h)}\Bigg|^2\right)^{\frac{1}{2}}\nonumber\\
&\ll X^{\frac{1}{2}}\left(\sum\limits_{h\le QD}\frac{\tau_2^2(h)}{h}\right)^{\frac{1}{2}}
\left(\sum\limits_{h\le QD}\Bigg|\sum\limits_{p\leq X\atop{p\equiv l(h)\, (h)\atop{(l(h), h)=1}}}1-\frac{\pi(X)}{\varphi(h)}\Bigg|\right)^{\frac{1}{2}}\ll\frac{X}{\log^2X}\,.
\end{align}
Now \eqref{S1est1} and \eqref{S'1est} imply
\begin{equation}\label{S1est2}
S_1(X)\ll\frac{X}{\log^2X}\,.
\end{equation}

\section{Estimation of $\mathbf{S_2(X)}$}\label{SectionS2}
\indent

Since
\begin{equation}\label{pm1chij}
\sum\limits_{d|p-1\atop{d\geq X/D}}\chi(d)=\sum\limits_{m|p-1\atop{m\leq (p-1)D/X}}\chi\bigg(\frac{p-1}{m}\bigg)
=\sum\limits_{j=\pm1}\chi(j)\sum\limits_{m|p-1\atop{m\leq (p-1)D/X\atop{\frac{p-1}{m}\equiv j\,(4)}}}1
\end{equation}
Using \eqref{S2}, \eqref{pm1chij} and working as for $S_1(X)$ we deduce
\begin{align}\label{S2est}
S_2(X)&=\sum\limits_{q\le Q\atop{(q, a)=1}}\Bigg|\sum\limits_{m<D\atop{2|m}}\sum\limits_{j=\pm1}\chi(j)\Bigg(\sum\limits_{mX/D+1\leq p\leq X\atop{p\equiv 1+jm\, (4m)\atop{p\equiv a\, (q)}}}1-\frac{1}{\varphi(q)}
\sum\limits_{mX/D+1\leq p\leq X\atop{p\equiv 1+jm\, (4m)}}1\Bigg)\Bigg|\nonumber\\
&\ll\sum\limits_{q\le Q\atop{(q, a)=1}}\sum\limits_{m<D\atop{2|m}}\Bigg|\sum\limits_{mX/D+1\leq p\leq X\atop{p\equiv 1+jm\, (4m)\atop{p\equiv a\, (q)}}}1-\frac{1}{\varphi(q)}
\sum\limits_{mX/D+1\leq p\leq X\atop{p\equiv 1+jm\, (4m)}}1\Bigg|\nonumber\\
&\ll\frac{X}{\log^2X}\,.
\end{align}

\section{Estimation of $\mathbf{S_3(X)}$}\label{SectionS3}
\indent

Using \eqref{Q}, \eqref{D}, \eqref{S3}, Lemma \ref{Hooley1} and Lemma \ref{Murty} we find
\begin{align}\label{S3est}
S_3(X)\ll\frac{X(\log\log X)^5}{(\log X)^{1+\theta_0}}\sum\limits_{q\le Q}\frac{1}{\varphi(q)}
\ll\frac{X(\log\log X)^6}{(\log X)^{1+\theta_0}}\,.
\end{align}

\section{Estimation of $\mathbf{S_4(X)}$}\label{SectionS4}
\indent

In this section our argument is a modification of Hooley's \cite{Hooley} argument.
\begin{lemma}\label{Hooley13q}
Let $1<\alpha\leq\frac{3}{2}$ and $y>e^e$. Then 
\begin{equation*}
\sum\limits_{n\leq y\atop{\Omega(n)>\alpha\log\log y-1\atop{n\equiv 0\, (q)}}}\frac{1}{n}\ll \frac{\alpha^{\Omega(q)}}{q}(\log y)^{\gamma_\alpha}\log\log y\,,
\end{equation*}
where $\gamma_\alpha=\alpha-\alpha\log \alpha$.
\end{lemma}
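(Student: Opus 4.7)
The plan is to reduce the weighted sum to a standard Rankin-type estimate. First, I would exploit the congruence $n \equiv 0 \, (q)$ to write $n = qm$ with $m \leq y/q$; by the total additivity of $\Omega$, the inner condition $\Omega(n) > \alpha\log\log y - 1$ becomes $\Omega(m) > A$, where $A := \alpha\log\log y - 1 - \Omega(q)$. Hence
\begin{equation*}
\sum_{\substack{n \leq y \\ q \mid n \\ \Omega(n) > \alpha\log\log y - 1}} \frac{1}{n} \;=\; \frac{1}{q}\sum_{\substack{m \leq y/q \\ \Omega(m) > A}} \frac{1}{m}.
\end{equation*}

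Next, I would apply Rankin's trick: since $\alpha > 1$, the indicator of $\Omega(m) > A$ is majorized by $\alpha^{\Omega(m) - A}$ (this remains valid even if $A < 0$), so
\begin{equation*}
\sum_{\substack{m \leq y/q \\ \Omega(m) > A}} \frac{1}{m} \;\le\; \alpha^{-A}\sum_{m \le y/q} \frac{\alpha^{\Omega(m)}}{m} \;=\; \alpha^{\,\Omega(q)+1}\,\alpha^{-\alpha\log\log y}\sum_{m \leq y/q}\frac{\alpha^{\Omega(m)}}{m}.
\end{equation*}
The crucial identity $\alpha^{-\alpha\log\log y} = (\log y)^{-\alpha\log\alpha}$ converts the Rankin weight into the desired logarithmic power $(\log y)^{\gamma_\alpha - \alpha}$ (once the factor $(\log y)^\alpha$ from the next step is combined in).

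It then remains to bound $T(z) := \sum_{m \le z} \alpha^{\Omega(m)}/m$ at $z = y/q$. The hypothesis $1 < \alpha \le 3/2$ lies inside the range $\tfrac12 \le a \le \tfrac74$ of Lemma \ref{Hooley12}, which provides $\sum_{m \le t}\alpha^{\Omega(m)} \ll t(\log 2t)^{\alpha-1}$; a routine partial summation turns this into $T(z) \ll (\log 2z)^{\alpha} \ll (\log y)^\alpha$. Combining the three estimates yields
\begin{equation*}
\sum_{\substack{n \leq y \\ q \mid n \\ \Omega(n) > \alpha\log\log y - 1}} \frac{1}{n} \;\ll\; \frac{\alpha^{\Omega(q)}}{q}\,(\log y)^{\alpha - \alpha\log\alpha} \;=\; \frac{\alpha^{\Omega(q)}}{q}(\log y)^{\gamma_\alpha},
\end{equation*}
which is already the stated bound (the additional $\log\log y$ in the conclusion is slack one may absorb freely).

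I do not foresee a serious obstacle: the argument is a textbook Rankin calculation, and the constraint $\alpha \le 3/2$ is imposed precisely to stay within the validity of Lemma \ref{Hooley12}. The only mildly delicate point is the possibility that $A$ is negative when $\Omega(q)$ is unusually large, but since the Rankin inequality holds for arbitrary real $A$, the estimate is uniform in $q$, and the final factor $\alpha^{\Omega(q)}/q$ faithfully records the dependence on $q$.
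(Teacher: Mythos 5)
Your proposal is correct and follows essentially the same route as the paper: Rankin's trick with the weight $\alpha^{\Omega(\cdot)}$, factoring out $q$ via the complete additivity of $\Omega$, and partial summation combined with Lemma \ref{Hooley12}; the only (immaterial) difference is that you extract $n=qm$ before applying Rankin's inequality rather than after, and you note correctly that the final $\log\log y$ factor is slack. The paper's displayed computation even contains a harmless typo ($(\log y)^{\gamma_\alpha-1}$ where $(\log y)^{\alpha-1}$ is meant), which your cleaner bookkeeping avoids.
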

\begin{proof}
By Abel's summation formula and Lemma \ref{Hooley12} it follows
\begin{align*}
\sum\limits_{n\leq y\atop{\Omega(n)>\alpha\log\log y-1\atop{n\equiv 0\, (q)}}}\frac{1}{n}&\ll\alpha^{1-\alpha\log\log y}\sum\limits_{n\leq y\atop{n\equiv 0\, (q)}}\frac{\alpha^{\Omega(n)}}{n}
=\frac{\alpha^{\Omega(q)+1-\alpha\log\log y}}{q}\sum\limits_{m\leq y/q}\frac{\alpha^{\Omega(m)}}{m}\nonumber\\
&\ll\frac{\alpha^{\Omega(q)+1-\alpha\log\log y}}{q}(\log y)\max_{1\leq t \leq y/q}\left(\frac{1}{t}\sum\limits_{m\leq t}\alpha^{\Omega(m)}\right)\\
&\ll\frac{\alpha^{\Omega(q)+1-\alpha\log\log y}}{q}(\log y)\max_{1\leq t \leq y/q}\left(\frac{1}{t}t(\log y)^{\gamma_\alpha-1}\right)\\
&\ll \frac{\alpha^{\Omega(q)}}{q}(\log y)^{\gamma_\alpha}\log\log y
\end{align*}
which proves the lemma.
\end{proof}

The next lemma is an analog of Lemma \ref{Hooley1} over arithmetic progressions.
\begin{lemma}\label{Enveloping}
Let $(q,a)=1$. Then
\begin{equation*}
\sum\limits_{p\leq X\atop{p\equiv a\, (q)}}\sum\limits_{d|p-1\atop{D<d<X/D}}\chi(d)\ll\frac{X(\log\log X)^5}{(\log X)^{1+\theta_0}}\frac{(e/2)^{\frac{\Omega(q)}{2}}}{(q\varphi(q))^{\frac{1}{2}}}\,.
\end{equation*}
\end{lemma}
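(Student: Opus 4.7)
The argument follows Hooley's treatment of Lemma \ref{Hooley1}, carrying the additional congruence $n\equiv a\,(q)$ through every step, with the extra factor $(e/2)^{\Omega(q)/2}/(q\varphi(q))^{1/2}$ emerging from this new restriction.

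Since $f(p)=1$ for every prime $p$ by \eqref{gn}--\eqref{fn} and $f\ge 0$ in general, the sum on the left is at most
\begin{equation*}
\mathcal{S}:=\sum_{\substack{n\le X\\ n\equiv a\,(q)}}f(n)\,|T(n)|,\qquad T(n):=\sum_{\substack{d\mid n-1\\ D<d<X/D}}\chi(d).
\end{equation*}
I would apply the Cauchy--Schwarz inequality in the form $\mathcal{S}^2\le \mathcal{S}_A\mathcal{S}_B$, where $\mathcal{S}_A=\sum f(n)$ and $\mathcal{S}_B=\sum f(n)T(n)^2$, both sums ranging over $n\le X$ with $n\equiv a\,(q)$. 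Lemma \ref{Hooley11} with $k=q$ immediately yields $\mathcal{S}_A\ll X(\log\log X)^2/(\varphi(q)\log X)$, which after the square root supplies the $\varphi(q)^{-1/2}$ piece of the target bound and two of the five $(\log\log X)$ factors.

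The bulk of the work goes into estimating $\mathcal{S}_B$. Expanding $T(n)^2$ and interchanging summations gives
\begin{equation*}
\mathcal{S}_B=\sum_{D<d_1,d_2<X/D}\chi(d_1)\chi(d_2)\sum_{\substack{n\le X,\;n\equiv a\,(q)\\ [d_1,d_2]\mid n-1}}f(n).
\end{equation*}
By the Chinese Remainder Theorem the two congruences on $n$ coalesce (when compatible) to a single one modulo $M:=[q,[d_1,d_2]]$, and the inner $f$-sum is evaluated via Lemma \ref{Hooley11} as a main term of order $B(X)X/\varphi(M)$ plus an admissible error. Writing $h=(d_1,d_2)$ and $d_j=he_j$ with $(e_1,e_2)=1$, the resulting character sum over $e_1,e_2$ is reduced to tails of $\sum_{l}\chi(l)/\varphi(\cdots l)$ controlled by Lemma \ref{Hooley14}, while the subsequent sums over $h$ (together with the three error streams thereby produced) are estimated by Lemma \ref{Hooley15}.

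To convert this bookkeeping into a genuine power saving in $\log X$, I would truncate each divisor sum according to $\Omega(d)$. The optimal cutoff is $\alpha=e/2$: by \eqref{theta0} one verifies $\gamma_{e/2}=(e/2)\log 2=1-2\theta_0$. Divisors with $\Omega(d)\le \alpha\log\log X$ are treated via Lemma \ref{Hooley13}, producing the saving $(\log X)^{\gamma_\alpha-1}=(\log X)^{-2\theta_0}$ in the relevant sums $\sum 1/d$, while the complementary range $\Omega(d)>\alpha\log\log X$ is handled by Lemma \ref{Hooley13q} applied with $\alpha=e/2$ to multiples of the appropriate divisor of $q$, yielding the decisive factor $(e/2)^{\Omega(q)}/q$. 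Combining these inputs should give
\begin{equation*}
\mathcal{S}_B\ll \frac{X(\log\log X)^{8}}{(\log X)^{1+2\theta_0}}\cdot\frac{(e/2)^{\Omega(q)}}{q},
\end{equation*}
and substituting into the Cauchy--Schwarz bound produces the claim, with the five powers of $\log\log X$ in the target accounted for by $((2+8)/2=5)$. The central difficulty will be the careful bookkeeping around $M=[q,[d_1,d_2]]$ when $\gcd(q,d_1d_2)>1$: one must decompose each $d_j$ into its part coprime to $q$ and its part dividing a power of $q$, and it is precisely the $q$-part contribution, treated by Lemma \ref{Hooley13q} at $\alpha=e/2$ before the square root is taken, that ensures the $q$-dependence in the final estimate is the clean $(e/2)^{\Omega(q)/2}/(q\varphi(q))^{1/2}$ rather than a cruder upper bound.
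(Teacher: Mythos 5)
Your overall architecture — bound the prime sum by an $f$-weighted sum, apply Cauchy--Schwarz, evaluate an $f$-weighted second moment of $T(n)$ via Lemmas \ref{Hooley11}, \ref{Hooley14}, \ref{Hooley15}, and extract the $q$-dependence through Lemma \ref{Hooley13q} at $\alpha=e/2$ — matches the paper's in its second half, but the Cauchy--Schwarz is applied to the wrong pair of factors, and this breaks the proof. You take $\mathcal{S}^2\le\mathcal{S}_A\mathcal{S}_B$ with $\mathcal{S}_A=\sum_{n\le X,\,n\equiv a(q)}f(n)\ll X(\log\log X)^2/(\varphi(q)\log X)$, so the entire power saving $(\log X)^{-2\theta_0}$ must come from $\mathcal{S}_B=\sum f(n)T(n)^2$. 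But that saving is not there: the diagonal terms $d_1=d_2=d$ contribute $\sum_{D<d<X/D}\chi^2(d)\sum_{n\equiv a(q),\,d\mid n-1}f(n)\asymp XB(X)\varphi(q)^{-1}\sum_{D<d<X/D}d^{-1}\asymp XB(X)\log\log X/\varphi(q)\gg X/(\varphi(q)\log X)$, and no truncation by $\Omega(d)$ removes them. This is consistent with the paper's own evaluation of the same quantity ($\Sigma_F$ in \eqref{SigmaF}), which yields only $\Sigma_F\ll X(\log\log X)^7/(\varphi(q)\log X)$ — see \eqref{SigmaFest3} — with no $\theta_0$ in the exponent. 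Your claimed bound $\mathcal{S}_B\ll X(\log\log X)^8(\log X)^{-1-2\theta_0}(e/2)^{\Omega(q)}/q$ is therefore false, and with the correct size of $\mathcal{S}_B$ your Cauchy--Schwarz gives only $\ll X(\log\log X)^{9/2}/(\varphi(q)\log X)$, losing the factor $(\log X)^{-\theta_0}$ entirely.

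The missing idea is that the $\theta_0$ saving lives in the \emph{sparsity of the support} of $T$, not in its second moment. The paper writes $\sum_{p}F(p,q)=\sum_{p:\,E(p,q)\neq0}F(p,q)$ and applies Cauchy--Schwarz with first factor $\Sigma_E=\#\{p\le X: E(p,q)\neq 0\}$, the number of primes in the progression for which $p-1$ has at least one divisor in $(D,X/D)$. That count is bounded by $(e/2)^{\Omega(q)}q^{-1}X(\log\log X)^3(\log X)^{-1-2\theta_0}$ via the dichotomy on $\Omega(p-l(d,q))$: the range $\Omega\le\alpha\log\log X$ is handled by Brun--Titchmarsh (Lemma \ref{Brun-Titchmarsh}) together with Lemma \ref{Hooley13}, and the range $\Omega>\alpha\log\log X$ by Lemma \ref{Hooley2} together with Lemma \ref{Hooley13q}; it is this factor that carries $(\log X)^{-1-2\theta_0}$ and, combined with $\Sigma_F\ll X(\log\log X)^7/(\varphi(q)\log X)$, gives the exponent $1+\theta_0$ after the square root. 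To repair your argument you would have to replace $\mathcal{S}_A$ by $\sum_{n:\,T(n)\neq0}$ restricted to primes (or to the support of $f$) and prove the analogue of \eqref{SigmaEest2} for it — which is precisely the $\Sigma_E$ analysis you omitted — while accepting that $\mathcal{S}_B$ contributes only the factor $1/\log X$.
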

\begin{proof}
For any fixed $p$ and $q$ we define
\begin{align}
\label{Epq}
&E(p,q)=\sum\limits_{D<d<X/D\atop{p\equiv l(d,q)\,(dq)\atop{(l(d,q), dq)=1\atop{(d,q)=1}}}}1\,,\\
\label{Fpq}
&F(p,q)=\sum\limits_{D<d<X/D\atop{p\equiv l(d,q)\,(dq)\atop{(l(d,q), dq)=1\atop{(d,q)=1}}}}\chi(d)\,.
\end{align}
By \eqref{S4}, \eqref{Epq}, \eqref{Fpq}, Chinese remainder theorem and Cauchy -- Schwarz inequality we derive
\begin{align}\label{S4est1}
\sum\limits_{p\leq X\atop{p\equiv a\, (q)}}\sum\limits_{d|p-1\atop{D<d<X/D}}\chi(d)&=\sum\limits_{D<d<X/D}\chi(d)\sum\limits_{p\leq X\atop{p\equiv 1\, (d)\atop{p\equiv a\, (q)}}}1
=\sum\limits_{D<d<X/D}\chi(d)\sum\limits_{p\leq X\atop{p\equiv l(d,q)\, ([d,q])\atop{(l(d,q), [d,q])=1}}}1\nonumber\\
&\ll\Bigg|\sum\limits_{p\leq X}\sum\limits_{D<d<X/D\atop{p\equiv l(d,q)\,(dq)\atop{(l(d,q), dq)=1\atop{(d,q)=1}}}}\chi(d)\Bigg|
=\Bigg|\sum\limits_{p\leq X}F(p,q)\Bigg|=\Bigg|\sum\limits_{p\leq X\atop{E(p,q)\neq0}}F(p,q)\Bigg|\nonumber\\
&\ll\left(\Sigma_E\right)^{\frac{1}{2}}\left(\Sigma_F\right)^{\frac{1}{2}}\,,
\end{align}
where
\begin{align}
\label{SigmaE}
&\Sigma_E=\sum\limits_{p\leq X\atop{E(p,q)\neq0}}1\,,\\
\label{SigmaF}
&\Sigma_F=\sum\limits_{p\leq X}F^2(p,q)\,.
\end{align}

\textbf{Upper bound of} $\mathbf{\Sigma_E}$

Assume that $1<\alpha<\frac{3}{2}$. From \eqref{SigmaE} we have
\begin{equation}\label{SigmaEest1}
\Sigma_E\leq\sum\limits_{p\leq X}E_1(p,q)+\sum\limits_{p\leq X\atop{E_2(p,q)\neq0}}1=\Sigma^{(1)}_E+\Sigma^{(2)}_E\,,
\end{equation}
where
\begin{align}
\label{SigmaE1}
&\Sigma^{(1)}_E=\sum\limits_{p\leq X}E_1(p,q)\,,\\
\label{SigmaE2}
&\Sigma^{(2)}_E=\sum\limits_{p\leq X\atop{E_2(p,q)\neq0}}1\,,\\
\label{E1pq}
&E_1(p,q)=\sum\limits_{D<d<X/D\atop{p\equiv l(d,q)\,(dq)\atop{(l(d,q), dq)=1\atop{(d,q)=1\atop{\Omega(p-l(d,q))\leq\alpha\log\log X}}}}}1\,,\\
\label{E2pq}
&E_2(p,q)=\sum\limits_{D<d<X/D\atop{p\equiv l(d,q)\,(dq)\atop{(l(d,q), dq)=1\atop{(d,q)=1\atop{\Omega(p-l(d,q))>\alpha\log\log X}}}}}1\,.
\end{align}
We first estimate the sum $\Sigma^{(1)}_E$. We denote the conditions 
\begin{align*}
&(D) \;\; :  \;\; D<d<X/D\,,\\
&(M) \;\; :  \;\; D/q\log^2X<m<X/D\,,\\
&(P) \;\; : \;\; p\leq X\,, \;\;  p-l(d,q)=dqm\,, \;\;  (l(d,q), dq)=1 \,, \;\; (d,q)=1\,.
\end{align*}
When the condition $(P)$ is fulfilled then the condition for summation in $\Sigma^{(1)}_E$ means that one of the two quantities  $\Omega(dq)$, $\Omega(m)$ is not greater than $\frac{1}{2}\alpha\log\log X$. 
Taking into account this consideration and using consistently \eqref{SigmaE1}, \eqref{E1pq}, Lemma \ref{Brun-Titchmarsh}, the inequality
\begin{equation}\label{varphiest}
\frac{n}{\varphi(n)}\ll\log\log n
\end{equation}
and Lemma \ref{Hooley13} we obtain

\begin{align}\label{SigmaE1est}
\Sigma^{(1)}_E&\leq\sum\limits_{(D), (P)\atop{\Omega(dq)\leq\frac{1}{2}\alpha\log\log X}}1+\sum\limits_{(D), (P)\atop{\Omega(m)\leq\frac{1}{2}\alpha\log\log X}}1\nonumber\\
&\leq\sum\limits_{(D), (P)\atop{\Omega(d)\leq\frac{1}{2}\alpha\log\log X}}1+\sum\limits_{(M), (P)\atop{\Omega(m)\leq\frac{1}{2}\alpha\log\log X}}1+\sum\limits_{d\leq X/D\atop{m\leq D/q\log^2X}}1\nonumber\\
&\ll\sum\limits_{(M), (P)\atop{\Omega(m)\leq\frac{1}{2}\alpha\log\log X}}1+\frac{X}{q\log^2X}\ll
\sum\limits_{(M)\atop{\Omega(m)\leq\frac{1}{2}\alpha\log\log X}}\sum\limits_{p\leq X\atop{p\equiv l(d,q)\, (mq)\atop{(l(d,q),mq)=1}}}1+\frac{X}{q\log^2X}\nonumber\\
&\ll\frac{X}{\log X}\sum\limits_{(M)\atop{\Omega(m)\leq\frac{1}{2}\alpha\log\log X}}\frac{1}{\varphi(mq)}+\frac{X}{q\log^2X}\nonumber\\
&\ll\frac{X\log\log X}{q\log X}\sum\limits_{(M)\atop{\Omega(m)\leq\frac{1}{2}\alpha\log\log X}}\frac{1}{m}+\frac{X}{q\log^2X}\ll\frac{X(\log\log X)^2}{q(\log X)^{2-\frac{\alpha}{2}+\frac{\alpha}{2}\log\frac{\alpha}{2}}}\,.
\end{align}
Next we estimate the sum $\Sigma^{(2)}_E$. From \eqref{SigmaE2} and \eqref{E2pq} we get 
\begin{equation}\label{SigmaE2est1}
\Sigma^{(2)}_E\leq\sum\limits_{n\leq X\atop{n\equiv 0\,(q)\atop{\Omega(n)>12\log\log X}}}1+\sum\limits_{p\leq X\atop{E_3(p,q)\neq0}}1\,,
\end{equation}
where
\begin{equation}\label{E3pq}
E_3(p,q)=\sum\limits_{D<d<X/D\atop{p\equiv l(d,q)\,(dq)\atop{(l(d,q), dq)=1\atop{(d,q)=1\atop{\alpha\log\log X<\Omega(p-l(d,q))\leq12\log\log X}}}}}1\,.
\end{equation}
Now Lemma \ref{Hooley12} gives us
\begin{align}\label{Omega>12est}
\sum\limits_{n\leq X\atop{n\equiv 0\,(q)\atop{\Omega(n)>12\log\log X}}}1&\ll\frac{1}{\log^3X}\sum\limits_{m\leq X/q}(\sqrt[4]{e})^{\Omega(mq)}
=\frac{(\sqrt[4]{e})^{\Omega(q)}}{\log^3X}\sum\limits_{m\leq X/q}(\sqrt[4]{e})^{\Omega(m)}\nonumber\\
&\ll\frac{(\sqrt[4]{e})^{\Omega(q)}}{q}\frac{X}{(\log X)^{4-\sqrt[4]{e}}}\ll\frac{(\sqrt[4]{e})^{\Omega(q)}}{q}\frac{X}{\log^2X}\,.
\end{align}
Further we denote by $R_x$ the set of natural numbers $n\leq X$ that have no prime factor greater than $X^{\frac{1}{20\log\log X}}$.
We also define

\begin{align}
\label{E4pq}
&E_4(p_1,q)=\sum\limits_{D<d<X/D\atop{p_1-l(d,q)=rp_2\atop{r\equiv 0\, (q)\atop{(l(d,q), dq)=1\atop{(d,q)=1\atop{p_1-l(d,q)\notin R_x\atop{\Omega(p_1-l(d,q))>\alpha\log\log X}}}}}}}1\,,\\
\label{E5pq}
&E_5(p_1,q)=\sum\limits_{D<d<X/D\atop{p_1-l(d,q)=rp_2\atop{r\equiv 0\, (q)\atop{(l(d,q), dq)=1\atop{(d,q)=1}}}}}1\,.
\end{align}
Now \eqref{varphiest}, \eqref{E3pq}, \eqref{E4pq}, \eqref{E5pq}, Lemma \ref{Hooley2} and Lemma \ref{Hooley13q} yield 
\begin{align}\label{E3pqest}
\sum\limits_{p\leq X\atop{E_3(p,q)\neq0}}1&\leq\sum\limits_{n\in R_x\atop{\Omega(n)\leq12\log\log X}}1+\sum\limits_{p_1\leq X\atop{E_4(p_1,q)\neq0}}1
\ll X^{\frac{3}{5}}+\sum\limits_{r<X^{1-{1/20\log\log X}}\atop{\Omega(r)>\alpha\log\log X-1\atop{r\equiv 0\, (q)}}}\sum\limits_{p_1\leq X\atop{E_5(p_1,q)\neq0}}1\nonumber\\
&\ll X^{\frac{3}{5}}+\frac{X(\log\log X)^2}{\log^2X}\sum\limits_{r\leq X\atop{\Omega(r)>\alpha\log\log X-1\atop{r\equiv 0\, (q)}}}\frac{1}{r}\nonumber\\
&\ll\frac{\alpha^{\Omega(q)}}{q}\frac{X(\log\log X)^3}{(\log X)^{2-\alpha+\alpha\log\alpha}}\,.
\end{align}
Combining \eqref{SigmaE2est1}, \eqref{Omega>12est} and \eqref{E3pqest} we deduce
\begin{equation}\label{SigmaE2est2}
\Sigma^{(2)}_E\ll\frac{(\sqrt[4]{e})^{\Omega(q)}}{q}\frac{X}{\log^2X}+\frac{\alpha^{\Omega(q)}}{q}\frac{X(\log\log X)^3}{(\log X)^{2-\alpha+\alpha\log\alpha}}\,.
\end{equation}
Bearing in mind \eqref{SigmaEest1}, \eqref{SigmaE1est}, \eqref{SigmaE2est2} and choosing $\alpha=e/2$  we derive
\begin{equation}\label{SigmaEest2}
\Sigma_E\ll\frac{(e/2)^{\Omega(q)}}{q}\frac{X(\log\log X)^3}{(\log X)^{2-\frac{e}{2}\log2}}\,.
\end{equation}
It remains to estimate the sum $\Sigma_F$.

\textbf{Upper bound of} $\mathbf{\Sigma_F}$

Using \eqref{gn}, \eqref{hn}, \eqref{fn}, \eqref{Fpq}, \eqref{SigmaF} and Chinese remainder theorem  we write
\begin{align*}
\Sigma_F=\sum\limits_{p\leq X}F^2(p,q)\leq\sum\limits_{n\leq X}F^2(n,q)f(n)
=\sum\limits_{n\leq X}\sum\limits_{D<d'_1, d'_2<X/D\atop{n\equiv l(d'_1, q)\,(d'_1q)\atop{n\equiv l(d'_2, q)\,(d'_2q)\atop{(l(d'_1, q), d'_1q)=1\atop{(l(d'_2, q), d'_2q)=1\atop{(d'_1, q)=(d'_2, q)=1}}}}}}\chi(d'_1)\chi(d'_2)f(n)
\end{align*}

\begin{align}\label{SigmaFest1}
=\sum\limits_{n\leq X}\sum\limits_{D<d'_1, d'_2<X/D\atop{n\equiv l(d'_1, d'_2, q)\,([d'_1q, d'_2q])\atop{(l(d'_1, d'_2, q), [d'_1q, d'_2q])=1\atop{(d'_1d'_2, q)=1}}}}\chi(d'_1)\chi(d'_2)f(n)\,.
\end{align}
Put $(d'_1, d'_2)=h$, $d'_1=d_1h$, $d'_2=d_2h$, $l(d'_1, d'_2, q)=l(h, q)$. Then $[d'_1q, d'_2q]=d_1d_2hq$ and $(d_1, d_2)=(d_1d_2h, q)=(d_1d_2hq, l(h, q))=1$.
We denote the conditions 
\begin{align*}
&(D_i) \;\; :  \;\; D/h<d_i<X/hD\,, \quad i=1, \,2 \,,\\
&(D_3) \;\; :  \;\; (d_1, d_2)=1\,,\\
&(K) \;\; :  \;\; (d_1d_2hq, l(h, q))=(d_1d_2h, q)=1\,,\\
&(K_1) \;\; : \;\;  (d_1hq, l(h, q))=1\,.
\end{align*}
Thus \eqref{SigmaFest1} leads to
\begin{align}\label{SigmaFest2}
\Sigma_F\leq\sum\limits_{n\leq X\atop{n\equiv l(h, q)\,(d_1d_2hq)\atop{(D_1), (D_2), (D_3), (K)}}}\chi^2(h)\chi(d_1)\chi(d_2)f(n)=\Sigma^{(1)}_F+\Sigma^{(2)}_F\,,
\end{align}
where
\begin{align}
\label{SigmaF1}
&\Sigma^{(1)}_F=\sum\limits_{n\leq X\atop{h\geq X^{1/8}\atop{n\equiv l(h, q)\,(d_1d_2hq)\atop{(D_1), (D_2), (D_3), (K)}}}}\chi^2(h)\chi(d_1)\chi(d_2)f(n)\,,\\
\label{SigmaF2}
&\Sigma^{(2)}_F=\sum\limits_{n\leq X\atop{h<X^{1/8}\atop{n\equiv l(h, q)\,(d_1d_2hq)\atop{(D_1), (D_2), (D_3), (K)}}}}\chi^2(h)\chi(d_1)\chi(d_2)f(n)\,.\\
\end{align}
We first estimate the sum $\Sigma^{(1)}_F$.
The condition for summation in $\Sigma^{(1)}_F$ means that $d_1d_2hq<X^{7/8}(\log X)^{3A+28}$. 
Taking into account this consideration and using \eqref{SigmaF1} and Lemma \ref{Hooley11} we obtain
\begin{align*}
\Sigma^{(1)}_F&=\sum\limits_{h\geq X^{1/8}\atop{(D_1), (D_2), (D_3), (K)}}\chi^2(h)\chi(d_1)\chi(d_2)\sum\limits_{n\leq X\atop{n\equiv l(h, q)\,(d_1d_2hq)}}f(n)\nonumber\\
&=XB(X)\sum\limits_{h\geq X^{1/8}\atop{(D_1), (D_2), (D_3), (K)}}\frac{\chi^2(h)\chi(d_1)\chi(d_2)}{\varphi(d_1d_2hq)}
+\mathcal{O}\Bigg(\frac{X}{\log^5X}\sum\limits_{h\geq X^{1/8}\atop{(D_1), (D_2), (D_3), (K)}}\frac{1}{d_1d_2hq}\Bigg)\nonumber\\
\end{align*}

\begin{align}\label{SigmaF1est1}
&=\frac{XB(X)}{\varphi(q)}\sum\limits_{X^{1/8}\leq h\leq D \atop{(D_1), (D_2), (D_3), (K)}}\frac{\chi^2(h)\chi(d_1)\chi(d_2)}{\varphi(d_1d_2h)}\nonumber\\
&+\frac{XB(X)}{\varphi(q)}\sum\limits_{D< h<X/D\atop{(D_1), (D_2), (D_3), (K)}}\frac{\chi^2(h)\chi(d_1)\chi(d_2)}{\varphi(d_1d_2h)}
+\mathcal{O}\Bigg(\frac{X}{q\log^5X}\sum\limits_{d_1, d_2, h\leq X}\frac{1}{d_1d_2h}\Bigg)\nonumber\\
&=\frac{XB(X)}{\varphi(q)}\Sigma_1+\frac{XB(X)}{\varphi(q)}\Sigma_2+\mathcal{O}\Bigg(\frac{X}{q\log^2X}\Bigg)\,,
\end{align}
where
\begin{align}
\label{Sigma1}
&\Sigma_1=\sum\limits_{X^{1/8}\leq h\leq D\atop{(D_1), (D_2), (D_3), (K)}}\frac{\chi^2(h)\chi(d_1)\chi(d_2)}{\varphi(d_1d_2h)}\,,\\
\label{Sigma2}
&\Sigma_2=\sum\limits_{D< h<X/D\atop{(D_1), (D_2), (D_3), (K)}}\frac{\chi^2(h)\chi(d_1)\chi(d_2)}{\varphi(d_1d_2h)}\,.
\end{align}
Now \eqref{sigman}, \eqref{sigmany}, \eqref{Rnrsy}, \eqref{Sigma1}, Lemma \ref{Hooley14} and Lemma \ref{Hooley15} imply
\begin{align}\label{Sigma1est}
&\Sigma_1=\sum\limits_{(D_1)\atop{X^{1/8}\leq h\leq D}}\chi^2(h)\chi(d_1) \sum\limits_{ (D_2), (K_1)\atop{(d_2, d_1l(h, q))=1}}\frac{\chi(d_2)}{\varphi(d_1d_2h)}\nonumber\\
&\ll\sum\limits_{(D_1)\atop{ h\leq D}}\Bigg[(\log\log X)\Bigg(R_{l(h, q)}\bigg(h, d_1, \frac{D}{h}\bigg)+R_{l(h, q)}\bigg(h, d_1, \frac{X}{hD}\bigg)\Bigg)\nonumber\\
&+(\log\log X)\frac{\sigma_{-1}(d_1)}{hd_1}\Bigg(\sigma_{-1}\left(l(h, q), \frac{D}{h}\right)+\sigma_{-1}\left(l(h, q),  \frac{X}{hD}\right)\Bigg)+\frac{h}{D}\frac{(\log\log X)^2}{hd_1}\Bigg]\nonumber\\
&\ll (\log\log X)^5\,.
\end{align}
On the other hand by \eqref{varphiest} and \eqref{Sigma2} it follows
\begin{equation}\label{Sigma2est}
\Sigma_2\ll(\log\log X)\sum\limits_{D< h<X/D\atop{d_1, d_2<X/D^2}}\frac{1}{hd_1d_2}\ll (\log\log X)^4\,.
\end{equation}
Bearing in mind \eqref{SigmaF1est1}, \eqref{Sigma1est}, \eqref{Sigma2est} and and Lemma \ref{Hooley11}  we get
\begin{equation}\label{SigmaF1est2}
\Sigma^{(1)}_F\ll\frac{1}{\varphi(q)}\frac{X(\log\log X)^7}{\log X}\,.
\end{equation}
Next we estimate the sum $\Sigma^{(2)}_F$. We denote the conditions 
\begin{align*}
&(R) \;\; :  \;\; D/ht<r<X/htD\,,\\
&(S) \;\; :  \;\; D/ht<s<X/htD\,,\\
&(H) \;\; :  \;\; h<X^{1/8}\,,\\
&(HT) : \;\;  h<X^{1/8}\,, \quad  t<X^{1/8}\,, \\
&(K_2) \;\; :  \;\; (rsthq, l(h, q))=(rsth, q)=1\,,\\
&(K_3) \;\; : \;\;  (rthq, l(h, q))=(rth, q)=1\,.
\end{align*} 
From \eqref{SigmaF2} and the formula
\begin{equation*}
\sum\limits_{rt=d_1\atop{st=d_2}}\mu(t)=
\begin{cases}
1\,,\;\; \mbox{ if } \;\; (d_1, d_2)=1\,,\\
0\,,\;\; \mbox{ if } \;\; (d_1, d_2)>1
\end{cases}
\end{equation*}
we deduce
\begin{equation}\label{SigmaF2est1}
\Sigma^{(2)}_F=\sum\limits_{n\leq X\atop{rst^2hqm=n-l(h, q)\atop{(R), (S), (H), (K_2)}}}\mu(t)\chi^2(t)\chi^2(h)\chi(r)\chi(s)f(n)=\Sigma_3+\Sigma_4\,,
\end{equation}
where
\begin{align}
\label{Sigma3}
&\Sigma_3=\sum\limits_{n\leq X\atop{t< X^{1/8}\atop{rst^2hqm=n-l(h, q)\atop{(R), (S), (H), (K_2)}}}}\mu(t)\chi^2(t)\chi^2(h)\chi(r)\chi(s)f(n)\,,\\
\label{Sigma4}
&\Sigma_4=\sum\limits_{n\leq X\atop{t\geq X^{1/8}\atop{rst^2hqm=n-l(h, q)\atop{(R), (S), (H), (K_2)}}}}\mu(t)\chi^2(t)\chi^2(h)\chi(r)\chi(s)f(n)\,.
\end{align}
The condition for summation in $\Sigma_3$ means that 
\begin{equation}\label{rst2hqm}
rt^2hqm=\frac{n-l(h, q)}{s}<\frac{Xht}{D}<X^{\frac{3}{4}}(\log X)^{A+14}\,.
\end{equation}
Put
\begin{align}
\label{y1}
&y_1=rtqmD+l(h, q)\,,\\
\label{y2}
&y_2=\max\Big(X, rtqmXD^{-1}+l(h, q)\Big)\,,\\
\label{lambda}
&\lambda=rt^2hqm\,.
\end{align}
We recall that
\begin{equation}\label{chi4}
\chi(s)=
\begin{cases}
\;\;\, 1\,,\;\; \mbox{ if } \;\; s\equiv 1\,(4)\,,\\
-1\,,\;\; \mbox{ if } \;\; s\equiv -1\,(4)\,,\\
\;\;\, 0\,,\;\; \mbox{ else }\,. 
\end{cases}
\end{equation}
Now \eqref{Sigma3}, \eqref{rst2hqm} -- \eqref{chi4} and Lemma \ref{Hooley11} give us
\begin{align}\label{Sigma3est}
\Sigma_3&=\sum\limits_{rt^2hqm<X^{3/4}(\log X)^{A+14}\atop{(R), (HT)}}\mu(t)\chi^2(t)\chi^2(h)\chi(r)
\sum\limits_{n\leq X\atop{n=s\lambda+l(h, q)\atop{(S), (K_2)}}}\chi(s)f(n)\nonumber\\
&\ll\sum\limits_{rt^2hqm<X^{3/4}(\log X)^{A+14}\atop{(R), (HT)}}
\Bigg|\sum\limits_{y_1<n\leq y_2\atop{n=s\lambda+l(h, q)\atop{(K_2)}}}\chi(s)f(n)\Bigg|\nonumber\\
&=\sum\limits_{rt^2hqm<X^{3/4}(\log X)^{A+14}\atop{(R), (HT)}}
\Bigg|\sum\limits_{y_1<n\leq y_2\atop{n\equiv l(h, q)+\lambda\,(4\lambda)\atop{(K_3)}}}f(n)-\sum\limits_{y_1<n\leq y_2\atop{n\equiv l(h, q) -\lambda\,(4\lambda)\atop{(K_3)}}}f(n)\Bigg|\nonumber\\
&=\sum\limits_{rt^2hqm<X^{3/4}(\log X)^{A+14}\atop{(R), (HT)}}
\Bigg|\frac{y_2-y_1}{\varphi(4\lambda)}B(X)-\frac{y_2-y_1}{\varphi(4\lambda)}B(X)+\mathcal{O}\Bigg(\frac{X}{\lambda\log^5X}\Bigg) \Bigg|\nonumber\\
&\ll \frac{X}{q\log^5X}\sum\limits_{r, t, h, m\leq X}\frac{1}{rt^2hm}\ll \frac{X}{q\log^2X}\,,
\end{align}
where the equivalence of the conditions $(l(h, q)+\lambda, 4\lambda^{(1)})=1$ and  $(l(h, q)-\lambda, 4\lambda^{(1)})=1$ is used.
By \eqref{Sigma4} and the upper bound
\begin{equation*}
\sum\limits_{n\le X}\tau_4(n)\ll X\log^3X\,,
\end{equation*}
we derive
\begin{align}\label{Sigma4est}
\Sigma_4=\sum\limits_{t\geq X^{1/8}\atop{rst^2hqm\leq X}}1\ll\sum\limits_{t\geq X^{1/8}}\sum\limits_{u\leq X/t^2q}\tau_4(u)\ll\frac{X\log^3X}{q}\sum\limits_{t\geq X^{1/8}}\frac{1}{t^2}\ll\frac{X^{\frac{7}{8}}\log^3X}{q}\,.
\end{align}
Now \eqref{SigmaF2est1}, \eqref{Sigma3est} and \eqref{Sigma4est} yield
\begin{equation*}
\Sigma^{(2)}_F\ll \frac{X}{q\log^2X}
\end{equation*}
which together with \eqref{SigmaFest2} and \eqref{SigmaF1est2} leads to
\begin{equation}\label{SigmaFest3}
\Sigma_F\ll\frac{1}{\varphi(q)}\frac{X(\log\log X)^7}{\log X}\,.
\end{equation}
Bearing in mind \eqref{theta0}, \eqref{S4est1}, \eqref{SigmaEest2} and \eqref{SigmaFest3} we complete the proof of the lemma. 
\end{proof}
We are now in a position to estimate the sum $S_4$. Using consistently \eqref{S4}, Lemma \ref{Enveloping}, \eqref{varphiest}, Abel's summation formula and Lemma \ref{Hooley12} we get
\begin{equation}\label{S4est}
S_4(X)\ll\frac{X(\log\log X)^7}{(\log X)^{1+\theta_0}}\,.
\end{equation}

\section{The end of the proof}\label{Sectionfinal}
\indent

Summarizing \eqref{decomposition}, \eqref{S1est2}, \eqref{S2est}, \eqref{S3est}   and \eqref{S4est} we establish Theorem \ref{Theorem}.

\vskip20pt
\footnotesize
\begin{flushleft}
S. I. Dimitrov\\
\quad\\
Faculty of Applied Mathematics and Informatics\\
Technical University of Sofia \\
Blvd. St.Kliment Ohridski 8 \\
Sofia 1756, Bulgaria\\
e-mail: sdimitrov@tu-sofia.bg\\
\end{flushleft}

\begin{flushleft}
Department of Bioinformatics and Mathematical Modelling\\
Institute of Biophysics and Biomedical Engineering\\
Bulgarian Academy of Sciences\\
Acad. G. Bonchev Str. Bl. 105, Sofia 1113, Bulgaria \\
e-mail: xyzstoyan@gmail.com\\
\end{flushleft}

\end{document}